\documentclass[11pt, a4paper]{article}

\usepackage{graphicx,tikz,caption,subcaption,fontenc}
\usepackage{placeins}
\usetikzlibrary{backgrounds}
\usepackage{fullpage}
\usepackage{enumitem,verbatim}
\usepackage{hyperref}
\usepackage{amsmath}
\usepackage{amssymb}
\usepackage{amsfonts}
\usepackage{amsthm}
\usepackage[capitalise]{cleveref}
\usepackage{bm}

\usepackage[margin=2.5cm]{geometry}

\newtheorem{theorem}{Theorem}[section]
\newtheorem{lemma}[theorem]{Lemma}
\newtheorem{claim}[theorem]{Claim}

\newtheorem{conjecture}[theorem]{Conjecture}

\newtheorem*{observation*}{Observation}
\newtheorem{proposition}[theorem]{Proposition}
\newtheorem{problem}[theorem]{Problem}

\newtheorem*{question*}{Question}
\newtheorem{innerdefinition}{Definition}
\newenvironment{definition*}
  {
   \innerdefinition}
  {\endinnerdefinition}

\theoremstyle{definition}
\newtheorem{defn}[theorem]{Definition}

\theoremstyle{remark}

\newcommand{\ex}{\mathrm{ex}}

\usepackage{tikz}
\tikzstyle{aNode} = [circle, fill = black]
\tikzstyle{bNode} = [circle,draw = black, thick]

\usetikzlibrary{patterns}
\usetikzlibrary{shapes}
\usetikzlibrary{decorations.pathreplacing}
\usetikzlibrary{decorations.pathmorphing}
\usetikzlibrary{positioning}
\usetikzlibrary{fit}
\usetikzlibrary{calc}

\usepackage{todonotes}

\title{Tree suspensions and transfer functions for single degree Tur\'an spectra}
\author{Jiangdong Ai\thanks{School of Mathematical Sciences and LPMC, Nankai University, Tianjin 300071, China. Supported by the National Natural Science Foundation of China (No.12522117). Email:
\texttt {jd@nankai.edu.cn}.}
\and
Laihao Ding\thanks{School of Mathematics and Statistics, and Key Laboratory of Nonlinear Analysis \& Applications (Ministry of Education), Central China Normal University, Wuhan 430079, China. Supported by the Fundamental Research Funds for the Central Universities (CCNU25JCPT031). Email: \texttt{dinglaihao@ccnu.edu.cn}.}
\and 
Hong Liu\thanks{Extremal Combinatorics and Probability Group (ECOPRO),  Institute for Basic Science (IBS), Daejeon, South Korea. Supported by IBS-R029-C4. Email: \texttt{hongliu@ibs.re.kr}.}
\and Haotian Yang\thanks{Department of Mathematics, California Institute of Technology, Pasadena, USA. Supported by Seed Fund Program for International Research Cooperation of Shandong University. Email: \texttt{hyang3@caltech.edu}.}}

\begin{document}
\date{}
\maketitle

\begin{abstract}
For integers $1\le \ell<k$, let $\Pi^k_\ell$ denote the single-forbidden $\ell$-degree Tur\'an spectrum of $k$-uniform hypergraphs.  We introduce \emph{transfer functions} for this spectrum: explicit functions $f$ such that, for every $F$, there is another single $k$-graph $F^*$ with
$\pi_\ell(F^*)=f(\pi_\ell(F))$.  This gives a mechanism for producing new single-forbidden densities while retaining full control of the resulting value.

Our transfer functions are realized by a new family of suspension-type operations, called \emph{tree suspensions}.  From these operations we obtain three explicit maps: one acting on $\Pi^k_\ell$ for every $1\le\ell<k$, a second acting when $\ell\ge k/2$, and a third acting in the ordinary Tur\'an case $\ell=1$.  The common feature is a robust tree structure which gives the lower bound by a two-part construction and, in the regimes above, admits a matching embedding or Lagrangian upper bound.

As a first application, the universal transfer function propagates accumulation points.  Using the recent zero-accumulation results for $\ell\ge2$ together with the ordinary Tur\'an accumulation result of Conlon and Sch\"ulke, we prove that $\Pi^k_\ell$ has infinitely many accumulation points for every $k\ge3$ and every $1\le\ell<k$.  This recovers, in particular, the known infinitude of accumulation points in the ordinary and codegree spectra.

As a second application, combining two independent transfer functions forces algebraic degrees to grow.  For every $k\ge3$ and every $\ell\in\{1,\lceil k/2\rceil,\ldots,k-2\}$, the spectrum $\Pi^k_\ell$ contains algebraic numbers of arbitrarily large degree over $\mathbb Q$.  Thus the arithmetic complexity previously known for finite forbidden families already occurs in the single-forbidden spectrum, both for ordinary Tur\'an density and for a broad range of degree Tur\'an densities.
\end{abstract}



\section{Introduction}

A central problem in extremal combinatorics is to determine the \emph{Tur\'an number} $\ex(n,\mathcal F)$, the maximum number of edges in an $n$-vertex $k$-uniform hypergraph, or $k$-graph, containing no member of a family $\mathcal F$ of $k$-graphs.  The corresponding asymptotic parameter is the
\emph{Tur\'an density}
\[
        \pi(\mathcal F)=\lim_{n\to\infty}\frac{\ex(n,\mathcal F)}{\binom n k},
\]
where the limit exists by the classical averaging argument of Katona, Nemetz and Simonovits \cite{KatonaNemetzSimonovits}. 
When $\mathcal F=\{F\}$ consists of a single $k$-graph $F$, we write simply $\pi(F)$ for $\pi(\mathcal F)$.

For graphs, the Erd\H{o}s--Stone--Simonovits theorem~\cite{ErdosSimonovits,ErdosStone} essentially determines all Tur\'an densities. 
However, for hypergraphs of uniformity at least three, the situation is radically different.  Even Tur\'an's original problem of determining $\pi(K_4^{(3)})$ remains open, where $K_4^{(3)}$ denotes the complete $3$-graph on four vertices. 
This scarcity of exact values has led to a complementary point of view: rather than asking only for individual Tur\'an densities, one may study the \emph{spectrum} of all possible Tur\'an densities and ask about its topological and arithmetic structure.
See, for instance, the surveys \cite{FurediSurvey,KeevashSurvey,SidorenkoSurvey}.

Let $\Pi^{k}$, $\Pi^k_{\mathrm{fin}}$ and $\Pi^k_{\infty}$ denote the single, finite-family and arbitrary-family Tur\'an spectrum for $k$-graphs:
\[
\Pi^{k}=\{\pi(F) : F\text{ is a single }k\text{-graph}\},
\]
\[
        \Pi^k_{\mathrm{fin}}
        =\{\pi(\mathcal F) : \mathcal{F}\text{ is a finite family of } k \text{-graphs}\},
\]
and
\[
        \Pi^k_{\infty}
        =\{\pi(\mathcal F) : \mathcal{F}\text{ is an arbitrary family of } k \text{-graphs}\}.
\]
Clearly $\Pi^{k}\subseteq\Pi^k_{\mathrm{fin}}\subseteq\Pi^k_{\infty}$.  For $k=2$ these sets are determined by the Erd\H{o}s--Stone--Simonovits theorem.  For $k\ge3$, even very coarse structural questions about these spectra are difficult.

The first such structural questions concerned gaps and accumulation points.  Erd\H{o}s~\cite{Erdos1964} proved that $\Pi^k_{\infty}\cap(0,k!/k^k)=\varnothing$ and later conjectured that every point of $[0,1)$ has a non-trivial gap immediately to its right in $\Pi^k_\infty$~\cite{Erdos1983Conjectures}.  Frankl and R\"odl~\cite{FranklRodl} disproved this jumping conjecture, and several refinements followed; see, for example,~\cite{BaberTalbot,FranklPengRodlTalbot}.  In the single Tur\'an spectrum the problem is much more rigid.  Conlon and Sch\"ulke first proved that $\Pi^k$ has an accumulation point in $[0,1)$ for every $k\ge3$, with $1/2$ an accumulation point of $\Pi^3$, and then proved that $\Pi^k$ has infinitely many accumulation points for every $k\ge3$~\cite{ConlonSchulkeAccumulate,ConlonSchulkeInfinite}.

A second line of work asks how arithmetically complicated Tur\'an densities can be.  Baber and Talbot~\cite{BaberTalbot2012} found irrational elements of $\Pi^3_{\mathrm{fin}}$, disproving a conjecture of Chung and Graham.  Pikhurko~\cite{Pikhurko} developed a systematic theory for family spectra, showing in particular that $\Pi^k_{\mathrm{fin}}$ contains irrational numbers for every $k\ge3$ and that $\Pi^k_\infty$ has the cardinality of the continuum.  Liu and Pikhurko~\cite{LiuPikhurko}, answering a question of Grosu~\cite{Grosu}, proved that $\Pi^k_{\mathrm{fin}}$ contains algebraic numbers of arbitrarily large degree for every $k\ge3$. Very recently, Liu and Pikhurko~\cite{LiuPikhurkoIntervals} further proved that $\Pi^k_\infty$ contains non-degenerate intervals for every $k\ge3$, and in particular has full Hausdorff dimension. In contrast, irrational examples in the single Tur\'an spectrum are far more delicate; known constructions include work of Yan and Peng~\cite{YanPeng}, Wu~\cite{Wu}, and Kam\v{c}ev, Letzter and Pokrovskiy~\cite{KamcevLetzterPokrovskiy}.

These results reveal the richness of hypergraph Tur\'an spectra, but they also highlight the difficulty of such problems, especially in the single-forbidden setting.  The aim of this paper is to give a systematic way to build new single-forbidden densities from old ones.  We do this by introducing transfer functions acting directly on the more general single degree Tur\'an spectra and by realizing these functions through explicit hypergraph operations.

\subsection{Degree Tur\'an spectra and transfer functions}

We work in the more general setting of degree Tur\'an densities.  Let $1\le\ell<k$.  For a $k$-graph $H$ and an $\ell$-set $S\subseteq V(H)$, write $d_H(S)$ for the number of edges of $H$ containing $S$, and let $\delta_\ell(H)$ be the minimum of $d_H(S)$ over all such $S$.  The \textit{$\ell$-degree Tur\'an number} $\ex_\ell(n,F)$ is the largest possible value of $\delta_\ell(H)$ over all $n$-vertex $F$-free $k$-graphs $H$, and
\[
        \pi_\ell(F)=
        \lim_{n\to\infty}
        \frac{\ex_\ell(n,F)}{\binom{n-\ell}{k-\ell}}
\]
is the \textit{$\ell$-degree Tur\'an density}, whose existence was proved by Lo and Markstr\"om~\cite{LoMarkstrom}.  The case $\ell=1$ coincides with the ordinary Tur\'an density by the standard deletion argument, while $\ell=k-1$ is the codegree Tur\'an density.  We write $\Pi^k_\ell$, $\Pi^k_{\ell,\mathrm{fin}}$ and $\Pi^k_{\ell,\infty}$ for the corresponding single, finite-family and arbitrary-family degree Tur\'an spectrum.

Degree Tur\'an spectra display new behavior that is invisible in the ordinary case.  Whereas ordinary Tur\'an densities have the initial Erd\H{o}s gap, $\Pi^k_{\ell,\infty}$ is dense in $[0,1)$ for every $\ell\ge2$~\cite{LoMarkstrom,mubayi2007co}.  In the codegree case, Gao, Pikhurko, Rong and Sun~\cite{GaoPikhurkoRongSun} recently proved that $\Pi^k_{k-1,\mathrm{fin}}$ contains all rational numbers for $k\ge3$.  For single degree Tur\'an spectra, Keevash and Zhao~\cite{keevash2007codegree} proved that $\Pi^2\subseteq\Pi^k_{k-1}$, and every $\alpha\in\Pi^2$ is now known to be an accumulation point of $\Pi^k_{k-1}$ for $k\ge3$~\cite{Accu-coTuran-Bjarne,PigaSchulke}.  The authors~\cite{DingLiuYang} proved that $0$ is an accumulation point of $\Pi^k_\ell$ for all $\ell\ge2$.  These seed accumulation results are the starting point for our transfer method below.

A function $f$ is called a \emph{transfer function} of $\Pi^k_\ell$ if $f(\beta)\in\Pi^k_\ell$ for every $\beta\in\Pi^k_\ell$.  The key point is that such a function must be realized in the single-forbidden category: starting from one forbidden $k$-graph $F$, we must construct another single $k$-graph $F'$ with $\pi_\ell(F')=f(\pi_\ell(F))$.  Our main structural theorem gives three such functions.  Set
\[
\Phi_{k,\ell}(x)=1-\left(1+(1-x)^{-1/(k-\ell)}\right)^{-(k-\ell)},
\qquad
\Psi(x)=\frac{1}{2-x},
\]
and
\[
\Upsilon_k(x)=\left(\frac{k-1}{k-x}\right)^{k-1}.
\]

\begin{theorem}\label{tree-suspension}
Let $1\le \ell<k$. Then $\Phi_{k,\ell}(x)$ is a transfer function of $\Pi^k_{\ell}$. Moreover, $\Psi(x)$ is a transfer function of $\Pi^k_{\ell}$ whenever $\frac{k}{2}\leq\ell<k$, and $\Upsilon_k(x)$ is a transfer function of $\Pi^k_{1}=\Pi^k$. 
\end{theorem}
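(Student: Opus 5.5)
The plan is to realize each transfer function by a \emph{tree suspension} $F^*$ of the given $k$-graph $F$. The lower bound for $\pi_\ell(F^*)$ will come from a two-part construction whose optimized part sizes produce the formula. The upper bound will come from an embedding argument: for $\Phi_{k,\ell}$ and $\Psi$ a degree/counting embedding, and for $\Upsilon_k$ a Lagrangian argument. I treat $\Phi_{k,\ell}$ in detail, since the other two follow the same template with a different edge pattern.

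\textbf{Lower bound for $\Phi_{k,\ell}$.} Let $x=\pi_\ell(F)$. Split $[n]=A\cup B$ with $|A|=an$. Put an $F$-free $k$-graph $G$ on $B$ with $\delta_\ell(G)\ge (x-o(1))\binom{|B|-\ell}{k-\ell}$. Add every $k$-set that meets both $A$ and $B$, and put no edge inside $A$.
\begin{itemize}
\item An $\ell$-set in $A$ misses only the extensions lying inside $A$, so its relative degree is $1-a^{k-\ell}$.
\item An $\ell$-set in $B$ misses only the non-edges of $G$, so its relative degree is at least $1-(1-x)(1-a)^{k-\ell}$.
\item Mixed $\ell$-sets have relative degree $1-o(1)$.
\end{itemize}
Equating the first two, $t=a/(1-a)=(1-x)^{1/(k-\ell)}$, gives minimum relative degree $1-(1+t^{-1})^{-(k-\ell)}=\Phi_{k,\ell}(x)$. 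So I will define $F^*$ so that it cannot embed into any such host with $A$ independent and $B$ being $F$-free.

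\textbf{Definition of $F^*$.} Build $F^*$ from a copy of $F$ by a tree-like gluing. Root the construction at $F$. For each vertex (or $\ell$-set) of the current structure, attach a bounded-depth, bounded-branching tree of new edges. Arrange this so that in every $2$-colouring $A\cup B$ of $V(F^*)$, either some edge is monochromatic in $A$, or some copy of $F$ in the structure is coloured entirely $B$. Pigeonhole along the tree gives this.

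\textbf{Upper bound for $\Phi_{k,\ell}$.} Let $H$ be a host with $\delta_\ell(H)\ge(\Phi_{k,\ell}(x)+\varepsilon)\binom{n-\ell}{k-\ell}$. I will show $F^*\subseteq H$.
\begin{enumerate}
\item By supersaturation for $\pi_\ell$ (from the Lo--Markstr\"om averaging), every linear-size vertex set $U$ whose induced $\ell$-degrees exceed $(x+\varepsilon')\binom{|U|-\ell}{k-\ell}$ contains $\Omega(n^{v(F)})$ copies of $F$.
\item Let $B$ be the set of vertices whose $\ell$-sets have many extensions within a suitable robust neighbourhood, and let $A$ be the rest. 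The degree condition together with the identity defining $t$ shows two things: $A$ cannot be almost independent, and the part where $F$ is rare cannot absorb the missing density.
\item This yields many edges that are ``robust'' in the tree sense, namely each $\ell$-set has linearly many continuations, so the tree can be embedded greedily level by level with $F$ placed at the root via supersaturation.
\end{enumerate}

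\textbf{The other two transfer functions.} For $\Psi$ with $\ell\ge k/2$, I would use a different edge pattern. One candidate is to take all $k$-sets meeting $A$ in at most $k-\ell$ vertices, together with a copy of the $F$-extremal graph; the condition $\ell\ge k/2$ should guarantee that every $\ell$-set has a single controlling side, which makes the degree computation linear and yields $1/(2-x)$. This pattern is a guess and must be checked against $\Psi$, just as the pattern above was checked against $\Phi_{k,\ell}$. For $\Upsilon_k$ with $\ell=1$, I would weight a blow-up of the $F$-extremal structure against a single extra vertex class and optimize the Lagrangian. The upper bound there would be a Lagrangian argument: symmetrization shows that an $F^*$-free blow-up-closed family has Lagrangian at most that of this two-part pattern.

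\textbf{Main obstacle.} I expect the hardest step to be the matching upper bound in the $\Phi_{k,\ell}$ embedding, in particular proving that the tree gadget can always be completed. The first thing I would try is a robust cleaning lemma: delete $\ell$-sets of low degree iteratively to obtain a large subgraph in which every $\ell$-set has degree above $\Phi_{k,\ell}(x)+\varepsilon/2$. I would then run a weighted count, using the $a$-versus-$(1-a)$ balance, to force either an all-$B$ copy of $F$ with tree extensions or a contradiction.
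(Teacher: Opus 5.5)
\textbf{Verdict: there is a genuine gap.} The proposal gets the $\Phi_{k,\ell}$ lower bound, but it never defines $F^*$, and none of the three upper bounds has a working mechanism.

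\textbf{$\Phi_{k,\ell}$.} Your two-part lower bound is correct; it is the paper's construction with the parts relabelled. The colouring property you require of $F^*$ is exactly the robustness used there. The gap is that $F^*$ is never defined and the upper bound has no working mechanism. ``Linearly many continuations, then embed greedily level by level'' fails. At each level the copy of $F$ must lie \emph{entirely} inside the set of vertices whose link, in the current auxiliary graph, is still dense enough to continue. The hypothesis only guarantees $F$ inside sets of size at least $\gamma n$, where $\gamma=(1+(1-\beta)^{1/s})^{-1}$ and $s=k-\ell$. Indeed, since $\Phi_{k,\ell}(\beta)=1-(1-\beta)\gamma^s$, every $U$ with $|U|\ge\gamma n$ inherits $\delta_\ell(H[U])\ge(\beta+\varepsilon/2)\binom{|U|-\ell}{s}$, while for smaller $U$ the inherited bound drops below $\beta$. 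No cleaning, stability or host partition is needed. What is needed is a concrete gadget and thresholds tuned to $\gamma$:
\begin{itemize}
\item The paper takes $\ell$ disjoint root copies $X^1,\dots,X^\ell$ of $F$. This makes the pigeonhole work for $\ell\ge2$: if the independent side misses some $X^a$, that copy survives; otherwise it contains a full crossing $\ell$-tuple.
\item For every crossing $\ell$-tuple $X_{\mathbf a}$ it attaches a depth-$s$ choice tree of copies of $F$. The tree's $s$-edges (one vertex per level along a branch) complete $X_{\mathbf a}$ to edges.
\item The choice tree is embedded into the link $s$-graph $L_H(X_{\mathbf a})$ with thresholds $\rho_j=1-\theta^j$, where $\theta=1-\gamma$.
\item Averaging shows that a $j$-graph of density above $\rho_j$ has at least $\frac{\rho_j-\rho_{j-1}}{1-\rho_{j-1}}\,n=\gamma n$ vertices whose links have density above $\rho_{j-1}$. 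Place a copy of $F$ there and recurse into each of its vertices' links.
\end{itemize}
This geometric choice of thresholds, which makes every level produce a set of size $\gamma n$, is the idea missing from your step~2.

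\textbf{$\Psi$.} You give neither $F^*$ nor an upper bound. Your guessed host also fails once $\ell>k/2$: an $\ell$-set inside $A$ already meets $A$ in $\ell>k-\ell$ vertices, so none of the added $k$-sets contains it. The right host puts the $F$-free graph on a part of size $\gamma n$ with $\gamma^s=1/(2-\beta)$, and adds every other $k$-set meeting that part in at least $s$ vertices. The hypothesis $\ell\ge k/2$, i.e.\ $s\le\ell$, is exactly what makes an $\ell$-set inside the $F$-side miss only non-edges of the $F$-free graph, so that balancing $\gamma^s=1-(1-\beta)\gamma^s$ yields $\Psi(\beta)$. The gadget is $\mathcal T^{(k)}_{\ell,s-1}(F)$: each connecting edge consists of $X_{\mathbf a}$, one vertex of an attached copy of $F$, and $s-1$ new vertices. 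The upper bound then goes as follows. The link $L_H(X_{\mathbf a})$ has density above $\gamma^s$, so at least $\gamma n$ vertices have non-negligible degree in it. By the inherited-degree fact these vertices contain a copy of $F$, which is then extended by a greedy matching in the link.

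\textbf{$\Upsilon_k$.} ``Symmetrization'' does not apply to an arbitrary single $F^*$, because $\pi(F^*)$ is not a Lagrangian quantity in general. The missing idea is a homomorphism argument. Let $\mathcal M$ be the finite family of $k$-graphs $M$ that are unions of the images of homomorphisms $\phi:F\to M$ and $\phi_i:F\to M-\phi(i)$. Then $\mathcal T^{(k)}_{1,k-2}(F)$ maps homomorphically into $\operatorname{Ext}(M)$ for every $M\in\mathcal M$, so Sidorenko's extension theorem gives $\pi(F^*)\le\pi_\lambda(\mathcal M)$. An $\mathcal M$-free $H$ falls into one of two cases:
\begin{itemize}
\item $H$ admits no homomorphism from $F$, and then $k!\lambda(H)\le\beta$;
\item $H$ has a vertex $w$ with $F\not\to H-w$, and then $k!\lambda(H)\le\max_{y\in[0,1]}\bigl(\beta y^k+k(1-y)y^{k-1}\bigr)=\Upsilon_k(\beta)$.
\end{itemize}
None of this appears in your sketch.
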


Note that $\Phi_{k,k-1}=\Psi=\Upsilon_2$.  Thus the codegree case receives only one transfer function from this construction.  By contrast, when $k\ge3$ and $\ell\in\{1,\lceil k/2\rceil,\ldots,k-2\}$, we obtain two distinct transfer functions acting on the same spectrum; this is what drives the algebraic-degree result below.

\subsection{Accumulation points}

Let $\operatorname{Acc}(\Pi^k_\ell)\subseteq[0,1]$ be the set of accumulation points of $\Pi^k_\ell$.  Since $\Phi_{k,\ell}$ is continuous on $[0,1)$, \cref{tree-suspension} immediately yields the following result, which in particular recovers recent results of Conlon and Sch\"ulke \cite{ConlonSchulkeInfinite} and of Li, Liu, Sch\"ulke and Sun \cite{Accu-coTuran-Bjarne} on the infinitude of $\operatorname{Acc}(\Pi^k_1)$ and $\operatorname{Acc}(\Pi^k_{k-1})$, respectively.

\begin{theorem}\label{accumulation-orbits}
Let $1\le\ell<k$, and suppose that
$\beta\in\operatorname{Acc}(\Pi^k_\ell)\cap[0,1)$. Then $\Phi_{k,\ell}(\beta) \in\operatorname{Acc}(\Pi^k_\ell)$.
Consequently, $\operatorname{Acc}(\Pi^k_\ell)$ is infinite for $k\geq3$.
\end{theorem}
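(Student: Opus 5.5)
The plan is to derive both parts of the statement from \cref{tree-suspension} and the basic analytic properties of $\Phi_{k,\ell}$ on $[0,1)$.

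\emph{First part.} Take a sequence $\beta_n\in\Pi^k_\ell$ of pairwise distinct values with $\beta_n\to\beta$. Since $\beta<1$, we may discard finitely many terms and assume all $\beta_n\in[0,1)$. By \cref{tree-suspension}, each $\Phi_{k,\ell}(\beta_n)$ lies in $\Pi^k_\ell$. On $[0,1)$ the map $x\mapsto(1-x)^{-1/(k-\ell)}$ is continuous and strictly increasing, with values in $[1,\infty)$. Composing it with $t\mapsto 1-(1+t)^{-(k-\ell)}$, which is also strictly increasing, shows that $\Phi_{k,\ell}$ is continuous and strictly increasing on $[0,1)$. Injectivity makes the values $\Phi_{k,\ell}(\beta_n)$ pairwise distinct, and continuity gives $\Phi_{k,\ell}(\beta_n)\to\Phi_{k,\ell}(\beta)$. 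Hence $\Phi_{k,\ell}(\beta)$ is an accumulation point of $\Pi^k_\ell$.

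\emph{Consequence.} We need a seed accumulation point in $[0,1)$ and an infinite orbit.
\begin{itemize}
\item For $\ell\ge2$ the seed is $0$, which is an accumulation point of $\Pi^k_\ell$ by \cite{DingLiuYang}.
\item For $\ell=1$ the seed is an accumulation point of $\Pi^k$ in $[0,1)$, which exists by Conlon and Sch\"ulke \cite{ConlonSchulkeAccumulate}.
\end{itemize}
In both cases we iterate: set $x_0=\beta$ and $x_{j+1}=\Phi_{k,\ell}(x_j)$. For $x<1$ we have $(1+(1-x)^{-1/(k-\ell)})^{-(k-\ell)}\in(0,1)$, so $\Phi_{k,\ell}$ maps $[0,1)$ into $(0,1)$. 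Thus every $x_j$ lies in $[0,1)$, and by the first part every $x_j$ is an accumulation point.

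It remains to show that the $x_j$ are distinct, i.e.\ that $\Phi_{k,\ell}(x)>x$ on $[0,1)$. Write $s=(1-x)^{1/(k-\ell)}\in(0,1]$. Then
\[
1-\Phi_{k,\ell}(x)=\Bigl(1+\tfrac1s\Bigr)^{-(k-\ell)}=\Bigl(\tfrac{s}{1+s}\Bigr)^{k-\ell}<s^{k-\ell}=1-x.
\]
So the sequence $(x_j)$ is strictly increasing, and $\operatorname{Acc}(\Pi^k_\ell)$ is infinite.

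The only genuine input beyond \cref{tree-suspension} is the existence of the seed accumulation points from earlier work. The hypothesis $k\ge3$ enters only through those results: for $\ell\ge2$ it holds automatically, and for $\ell=1$ the Conlon--Sch\"ulke result is stated for $k\ge3$. The only point needing care is that $\Phi_{k,\ell}$ is undefined at $1$. This is handled by keeping all iterates in $[0,1)$, which holds because $\Phi_{k,\ell}$ maps $[0,1)$ into $(0,1)$.
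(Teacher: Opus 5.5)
Your proof is correct and follows the paper's argument essentially verbatim. Continuity and strict monotonicity of $\Phi_{k,\ell}$ on $[0,1)$ carry the accumulation point from $\beta$ to $\Phi_{k,\ell}(\beta)$, and iterating $\Phi_{k,\ell}(x)>x$ from the known seed accumulation points gives infinitely many. You also check explicitly that $\Phi_{k,\ell}$ is monotone, that it maps $[0,1)$ into $(0,1)$, and that $\Phi_{k,\ell}(x)>x$, which the paper only asserts.
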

\begin{proof}
Choose distinct $\beta_m\in\Pi^k_\ell$ with $\beta_m\to\beta$.  By \cref{tree-suspension}, $\Phi_{k,\ell}(\beta_m)\in\Pi^k_\ell$ for every $m$.  Since $\Phi_{k,\ell}$ is continuous and strictly increasing on $[0,1)$, the values $\Phi_{k,\ell}(\beta_m)$ are distinct and converge to $\Phi_{k,\ell}(\beta)$.  Hence $\Phi_{k,\ell}(\beta)$ is an accumulation point.

For $\ell\ge2$, it is known that $0\in\operatorname{Acc}(\Pi^k_\ell)$~\cite{DingLiuYang}; for $\ell=1$, it is known that $\operatorname{Acc}(\Pi^k_1)\cap[0,1)$ is non-empty when $k\ge3$~\cite{ConlonSchulkeAccumulate}.  Since $\Phi_{k,\ell}(x)>x$ for all $x\in[0,1)$, iterating $\Phi_{k,\ell}$ gives infinitely many accumulation points.
\end{proof}

The transfer function $\Phi_{k,\ell}(x)$ does more than propagate accumulation points:  it gives explicit infinite sequences. 
Let  $u_\ell(x)=(1-x)^{-1/(k-\ell)}$. Then $u_\ell(\Phi_{k,\ell}(x))=u_\ell(x)+1$.
Hence the $q$-fold iterate is
\[
        \Phi_{k,\ell}^{q}(x)=1-\left(q+(1-x)^{-1/(k-\ell)}\right)^{-(k-\ell)}.
\]
Thus, whenever $\beta\in\Pi_\ell^k$, all numbers 
\[
1-\left(q+(1-\beta)^{-1/(k-\ell)}\right)^{-(k-\ell)}\qquad(q\ge0)
\]
belong to $\Pi_\ell^k$.
Consequently, for $\ell\geq 2$ 
\[
1-\frac{1}{(q+1)^{k-\ell}}
\in\operatorname{Acc}(\Pi^k_\ell)
\qquad(q\ge0).
\]
Moreover, for the ordinary $3$-graph spectrum, Conlon and Sch\"ulke \cite{ConlonSchulkeAccumulate} proved that $1/2\in\operatorname{Acc}(\Pi^3)$. Applying $\Phi_{3,1}$ to this seed produces infinitely many irrational accumulation points of $\Pi^3$.

\subsection{Algebraic complexity}

The transfer viewpoint also gives arithmetic information.  The map $\Phi_{k,\ell}$ is linear after the change of variable $u_\ell=(1-x)^{-1/(k-\ell)}$, so by itself it does not force algebraic degrees to grow indefinitely.  The growth comes from combining this translation with a second transfer function: $\Psi$ in the range $k/2\le\ell\le k-2$, and $\Upsilon_k$ in the ordinary case $\ell=1$.  Starting from the zero-density seed given by a single $k$-edge, the alternating radical extensions have full degree at each step.

\begin{theorem}\label{thm:degree-high-degree}
Let $k\geq 3$ and $\ell\in\{1,\lceil k/2\rceil,\lceil k/2\rceil+1,\dots,k-2\}$.  For every
integer $D\ge1$, there exists a single $k$-graph $F$ such that $\pi_\ell(F)$ is algebraic of degree at least $D$ over $\mathbb Q$.
\end{theorem}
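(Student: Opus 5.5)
We take \cref{tree-suspension} as given and iterate two transfer functions, starting from the seed $0\in\Pi^k_\ell$. Here $\pi_\ell(F)=0$ when $F$ is a single $k$-edge, since an $n$-vertex $k$-graph with no edge has $\ell$-degree zero. In the range $\lceil k/2\rceil\le\ell\le k-2$ we alternate $\Phi=\Phi_{k,\ell}$ (with $m=k-\ell\ge2$) and $\Psi$. In the case $\ell=1$ we alternate $\Phi=\Phi_{k,1}$ (with $m=k-1\ge2$) and $\Upsilon_k$. Every number obtained this way lies in $\Pi^k_\ell$. It remains to show that the algebraic degrees of the iterates are unbounded.

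Work in the coordinate $u=(1-x)^{-1/m}$, in which $\Phi$ acts as $u\mapsto u+1$. For $\Psi$ we have $1-\Psi(x)=(1-x)/(2-x)$. Writing $y=1-x=u^{-m}$, this gives $u'=\bigl((1+y)/y\bigr)^{1/m}=(1+u^m)^{1/m}$. Thus the composite $G=\Psi\circ\Phi$ in the $u$-coordinate is $u\mapsto\bigl(1+(u+1)^m\bigr)^{1/m}$, an $m$-th root of a polynomial in $u$. For $\Upsilon_k$ we have $1-\Upsilon_k(x)=1-\bigl((k-1)/(k-1+y)\bigr)^{k-1}$ with $y=u^{-(k-1)}$. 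This is again a radical-type expression, although less clean, so there it may be better to track $v=(1-x)^{-1}$ or $w=\Upsilon_k(x)^{1/(k-1)}$.

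The plan is to build a tower $K_0=\mathbb Q\subseteq K_1\subseteq\cdots$ with $K_{j+1}=K_j(u_{j+1})$, where $u_{j+1}^m=a_j\in K_j$. It suffices to show $[K_{j+1}:K_j]\ge2$ for infinitely many $j$, since the degree of $x_j$ over $\mathbb Q$ equals that of $u_j$ up to a factor of $m$, and so must grow. For the $\Psi$ case, $a_j=1+(u_j+1)^m$. We would show that $a_j$ is not an $m$-th power, indeed not even a $p$-th power for a prime $p\mid m$, in $K_j$. Then Kummer theory, or the Capelli irreducibility criterion for $X^m-a$, gives full degree $m$ at each step.

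To rule out perfect powers, use a valuation or norm argument. Choose a prime $q$ (for example $q=2$, or a prime dividing $1+(1+1)^m$ at the first step) and show inductively that some prime of $K_j$ above it divides $a_j$ to a valuation not divisible by $p$. Alternatively, pass to a specialization or archimedean size argument: $a_j$ is a unit-free totally positive element whose conjugates are distinct real numbers, which forces the non-square property. A cheaper fallback avoids full degree altogether. It suffices that each $u_{j+1}\notin K_j$. If $u_{j+1}\in K_j=\mathbb Q(u_j)$, then the degrees stabilize, so all subsequent iterates lie in a fixed number field. A height argument then finishes the proof: the Weil height of $u_j$ grows roughly like $m^{-1}\cdot(\text{height of }u_{j-1})$ plus a bounded term, while repeated application of $G$ in a fixed field of bounded degree produces infinitely many distinct points of bounded-degree, bounded-in-some-sense height. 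By Northcott this contradicts infinitely many distinct values, provided the heights stay bounded. The main obstacle is exactly this non-power/irreducibility step: proving that $1+(u_j+1)^m$ (respectively the $\Upsilon_k$ analogue) is never an $m$-th power in the current field. I would first try a $2$-adic or $q$-adic valuation induction on $K_j$, with Northcott as the backup.
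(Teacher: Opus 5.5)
Your set-up is fine: the seed $0$, the coordinate $u=(1-x)^{-1/m}$, and $\Psi\circ\Phi$ acting as $u\mapsto(1+(u+1)^m)^{1/m}$. But the proof stops exactly at the step that carries all the content, and none of the routes you offer for it works.

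You never show that $a_j=1+(u_j+1)^m$ is not a $p$-th power in $K_j$. For $\ell=1$ you do not work out the map at all; it is $u\mapsto R(u+1)^{1/m}$ with $R(X)=(mX^m+1)^m/\bigl((mX^m+1)^m-(mX^m)^m\bigr)$.

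\begin{itemize}
\item A valuation induction at a fixed prime has nothing to induct on. For $m=2$, $a_0=5$ has $v_5(a_0)=1$. But $5$ ramifies in $K_1=\mathbb Q(\sqrt5)$, $2$ is inert there, and $a_1=7+2\sqrt5$ has norm $29$. So neither the prime above $5$ nor the one above $2$ divides $a_1$. The useful prime changes at every stage, and nothing in a fixed-shift iteration controls it.
\item The ``totally positive with distinct real conjugates'' criterion is false: $9+4\sqrt2=(1+2\sqrt2)^2$.
\item The Northcott fallback fails too. If all $u_j$ lay in one field $K\subset\mathbb R$ of degree $d$, then $h(u_j)\ge d^{-1}\log u_j\to\infty$, since $u_{j+1}>u_j+1$. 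Indeed $h(u_{j+1})=h(u_j)+O(1)$, not $m^{-1}h(u_j)+O(1)$. So infinitely many distinct points of unbounded height in $K$ contradict nothing.
\end{itemize}

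There is also a smaller unproved step. You use $K_j=\mathbb Q(u_j)$, but with $K_{j+1}$ defined as the compositum $K_j(u_{j+1})$ this needs $u_j\in\mathbb Q(u_{j+1})$, which is what $\mathbb Q(a_j)=K_j$ would give. Since $\mathbb Q(x_{j+1})=\mathbb Q(a_j)$, growth of $[K_j:\mathbb Q]$ does not by itself bound the degree of the density from below.

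The missing idea is to drop the rigid alternation. Since $\Phi^q$ is a transfer function for every $q\ge1$, the composite $\Psi\circ\Phi^q$ (resp.\ $\Upsilon_k\circ\Phi^q$) acts as $u\mapsto R(u+q)^{1/m}$. Here $R(X)=X^m+1$ (resp.\ the function above), and the paper chooses the shift $q$ afresh at each stage in terms of the current field $\mathbb K=\mathbb Q(u)$.

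\begin{itemize}
\item Primitivity is then cheap. A non-constant rational function has no non-zero additive period, so $\mathbb Q(R(u+q))=\mathbb K$ for all but finitely many $q$.
\item For irreducibility, take a simple zero (for $X^m+1$) or simple pole (for the $\Upsilon_k$ function) $c$ of $R$. Use Chebotarev to get a prime $p$ splitting completely in a Galois field containing $\mathbb K$, $c$ and $\mu_{2m}$. Choose $q$ with $u+q\equiv c$ modulo a prime $\mathfrak P$ above $p$. Then adjust $q$ modulo $p^2$ via a first-order Taylor expansion so that $v_{\mathfrak p}(R(u+q))=\pm1$.
\item Then $R(u+q)$ is neither a $d$-th power for $1<d\mid m$ nor in $-4\mathbb K^4$. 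Capelli's criterion makes $X^m-R(u+q)$ irreducible over $\mathbb K$.
\end{itemize}

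Infinitely many $q$ qualify in the irreducibility step and only finitely many are excluded by primitivity. So one gets $\mathbb Q(u')=\mathbb K(u')$ of degree $m[\mathbb K:\mathbb Q]$ and $\mathbb Q(x')=\mathbb K$. Your valuation instinct is the right one, but it becomes a proof only once the shift is free to be tuned to a prime chosen at the current level.
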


Thus the arithmetic complexity theorem of Liu and Pikhurko for finite forbidden families already has a single-forbidden analogue in the ordinary Tur\'an spectrum and in a broad range of degree Tur\'an spectra.  Combining this theorem with the accumulation-point transfer also gives infinitely many algebraic accumulation points of arbitrarily large degree in the same parameter ranges.

\subsection{Proof sketch}\label{sec:pf-idea}

We outline the construction behind \cref{tree-suspension}.  Given a non-empty $k$-graph $F$ with $\pi_\ell(F)=\beta$, we build a new single forbidden hypergraph whose $\ell$-degree Tur\'an density is an explicit function of $\beta$.  The construction is a family of \emph{tree suspensions}.

Let $s=k-\ell$ and $0\le t\le s-1$.  The $(\ell,t)$-tree suspension $\mathcal T^{(k)}_{\ell,t}(F)$ starts with $\ell$ disjoint root copies of $F$.  For each crossing $\ell$-tuple of root vertices, we attach a choice tree of depth $s-t$ whose branching copies are copies of $F$; each terminal choice is then enlarged by $t$ new vertices to form a connecting $k$-edge.  The key feature of $\mathcal T^{(k)}_{\ell,t}(F)$ is a robustness property: deleting a set which takes fewer than $k-t$ vertices from every edge cannot destroy all copies of $F$.

This robustness gives the lower bound.  Split the host vertex set as $A\cup B$, put an $F$-free near-extremal $k$-graph on $A$, and add every crossing $k$-set that meets $A$ in at least $t+1$ vertices.  If a copy of the tree suspension existed, deleting its vertices in $B$ would leave a copy of $F$ inside $A$, contradicting the choice of the graph on $A$.  Optimizing $|A|/n$ gives the candidate transfer value.

The upper bounds are the main work.  For $t=0$, a recursive embedding lemma finds the required choice trees inside dense links, giving the universal transfer function $\Phi_{k,\ell}$.  For $t=s-1$ and $\ell\ge k/2$, the same philosophy reduces to finding a copy of $F$ in a large support set of a link, giving the transfer function $\Psi$.  When $\ell=1$, the corresponding embedding argument is no longer strong enough; instead, we use the Lagrangian method together with Sidorenko's extension theorem to prove the ordinary transfer function $\Upsilon_k$.  These three sharp cases are precisely the transfer functions used for the accumulation and algebraic-complexity results.

\section{Tree suspensions: Proof of \cref{tree-suspension}}\label{sec:robust-tree}
Throughout this section, we fix integers $1\le \ell<k$ and write $s=k-\ell$. As outlined in \cref{sec:pf-idea}, our transfer functions arise from a unified tree-suspension construction applied to an arbitrary non-empty $k$-graph. We therefore study how the $\ell$-degree Tur\'an density changes under these tree suspensions. The three transfer functions in \cref{tree-suspension} will then follow immediately from Theorems~\ref{all-degree}, \ref{support-density} and \ref{turan-density}.

\subsection{The construction of $\mathcal T^{(k)}_{\ell,t}(F)$}

In this subsection, we give the construction of the $(\ell,t)$-tree suspension $\mathcal T^{(k)}_{\ell,t}(F)$ built recursively from a $k$-graph $F$, and establish a key robust property of it. 

Let $1\leq j<k$ be integers. A \textit{$(k,j)$-pattern} is a triple $P=(V(P),E_k(P),E_j(P))$ with $E_k(P)\subseteq V(P)^{(k)}$ and $E_j(P)\subseteq V(P)^{(j)}$, where the elements in $E_k(P)$ and $E_j(P)$ are respectively called \textit{$k$-edges} and \textit{$j$-edges} of $P$. 
We employ the following particular $(k,j)$-patterns for the construction of $(\ell,t)$-tree suspensions.  

\begin{defn}[Choice-tree patterns]\label{con:choice-trees}
Let $F$ be a non-empty $k$-graph with vertex set $[f]$. We recursively define a \textit{choice-tree $(k,j)$-pattern} $T_j(F)$ for each $1\le j\le s$.

For $j=1$, take a copy of $F$ with vertex set $X=\{x_1,\ldots,x_f\}$. The $k$-edges of $T_1(F)$ are the edges of this copy of $F$, and the $1$-edges are all singletons $\{x_a\}$ {\rm ($a\in[f]$)}.

For $j>1$, first take a root copy of $F$ with vertex set $X=\{x_1,\ldots,x_f\}$. 
For every $a\in[f]$, attach a disjoint copy $T^{a}_{j-1}(F)$ of $T_{j-1}(F)$. 
The $k$-edges of $T_j(F)$ are the $k$-edges of the root copy of $F$ together with all $k$-edges in the attached copies of $T_{j-1}(F)$. 
The $j$-edges are all sets $\{x_a\}\cup K$, where $a\in[f]$ and $K\in E_{j-1}(T^a_{j-1}(F))$.
\end{defn}

Clearly, $T_j(F)$ contains at least one copy of $F$. The next lemma establishes that this property is robust against the removal of any independent set of the underlying $j$-graph of $T_j(F)$.

\begin{lemma}\label{lem:tree-robustness}
Let $1\le j\le s$. For every subset $S\subseteq V(T_j(F))$, either the induced $k$-graph of $T_j(F)-S$ contains a copy of $F$, or $S$ spans at least one $j$-edge of $T_j(F)$.
\end{lemma}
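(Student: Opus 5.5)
We argue by induction on $j$, proving the contrapositive: if $S$ spans no $j$-edge of $T_j(F)$, then $T_j(F)-S$ contains a copy of $F$.

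For the base case $j=1$, the $1$-edges of $T_1(F)$ are exactly the singletons $\{x_a\}$, $a\in[f]$. So $S$ spans no $1$-edge precisely when $S\cap X=\varnothing$. In that case $T_1(F)-S$ contains all of $X$, and the copy of $F$ on $X$ survives intact.

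For the inductive step, let $j>1$ and assume the statement for $j-1$. Suppose $S$ spans no $j$-edge of $T_j(F)$.

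If $S\cap X=\varnothing$, the root copy of $F$ on $X$ survives and we are done.

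Otherwise pick some $x_a\in S\cap X$ and look at the attached copy $T^a_{j-1}(F)$, setting $S_a=S\cap V(T^a_{j-1}(F))$. If $S_a$ spanned some $(j-1)$-edge $K$ of $T^a_{j-1}(F)$, then $\{x_a\}\cup K\subseteq S$. But $\{x_a\}\cup K$ is a $j$-edge of $T_j(F)$, which contradicts our assumption on $S$. Hence $S_a$ spans no $(j-1)$-edge of $T^a_{j-1}(F)$. By the induction hypothesis, $T^a_{j-1}(F)-S_a$ contains a copy of $F$.

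This copy also lies in $T_j(F)-S$, for two reasons. First, the $k$-edges of $T^a_{j-1}(F)$ are $k$-edges of $T_j(F)$. Second, the attached copy is vertex-disjoint from the rest of the construction, so removing $S$ affects it only through $S_a$. This completes the induction.

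The only point needing care is the bookkeeping in the inductive step. A $j$-edge is the union of a root vertex $x_a$ and a $(j-1)$-edge of the subtree attached at that same $x_a$, so we must use the subtree attached at a deleted root vertex rather than an arbitrary one. We must also check that "induced sub-$k$-graph of $T_j(F)-S$" restricted to that subtree coincides with $T^a_{j-1}(F)-S_a$; this holds by disjointness. Beyond this, the argument is routine.
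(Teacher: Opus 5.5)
Your proof is correct and is essentially the paper's argument: induction on $j$, keeping the root copy when $S$ misses $X$, and otherwise applying the hypothesis to the subtree $T^a_{j-1}(F)$ attached at a deleted root vertex $x_a$. There, a spanned $(j-1)$-edge $K$ would give the spanned $j$-edge $\{x_a\}\cup K$, which you have ruled out. Your contrapositive framing and the explicit check via $S_a$ and vertex-disjointness are only a cosmetic difference from the paper's either/or phrasing.
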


\begin{proof}
We argue by induction on $j$. For $j=1$, if $S$ does not span a $1$-edge of $T_1(F)$, then $S=\varnothing$. Hence, the underlying $k$-graph of $T_1(F)-S$ is a copy of $F$.

Assume now that $j>1$ and that the claim is known for $j-1$. Let $X=\{x_1,\ldots,x_f\}$ be the vertex set of the root copy of $F$ in $T_j(F)$. If $S$ is disjoint from $X$, then $X$ is contained in the complement of $S$ and we are done. 
Otherwise, choose $x_a\in S\cap X$. Applying the induction hypothesis to the attached copy $T^a_{j-1}(F)$, we conclude that either the induced $k$-graph of $T^a_{j-1}(F)-S$ contains a copy of $F$ (and so does $T_j(F)-S$), or $S\setminus\{x_a\}$ spans a $(j-1)$-edge $K\in E_{j-1}(T^a_{j-1}(F))$, which implies that $\{x_a\}\cup K$ is a $j$-edge of $T_j(F)$ spanned by $S$.
\end{proof}

We now present the definition of $(\ell,t)$-tree suspensions.

\begin{defn}[Tree suspensions]\label{con:robust-tree-suspension}
Let $0\leq t\leq s-1$ and let $F$ be a non-empty $k$-graph with vertex set $[f]$. 
The \textit{$(\ell,t)$-tree suspension} $\mathcal T^{(k)}_{\ell,t}(F)$ of $F$ is defined as follows.

First, take $\ell$ pairwise disjoint root copies of $F$ with vertex sets $X^1,\ldots,X^\ell$ respectively, where $X^a=\{x^a_1,\ldots,x^a_f\}$ for $1\leq a\leq \ell$, and
include all edges of these copies of $F$ in $\mathcal T^{(k)}_{\ell,t}(F)$.

Next, for every ordered $\ell$-tuple $\mathbf a=(a_1,\ldots,a_\ell)\in[f]^\ell$, take a disjoint copy $T^{\mathbf a}_{s-t}(F)$ of the choice-tree pattern $T_{s-t}(F)$, and include all $k$-edges of this pattern in $\mathcal T^{(k)}_{\ell,t}(F)$.

Finally, for each $\mathbf a=(a_1,\ldots,a_\ell)\in[f]^\ell$ and each $K\in E_{s-t}(T^{\mathbf a}_{s-t}(F))$, take a new $t$-set $Y^{\mathbf a}_K$ and include the connecting $k$-edge
$e^{\mathbf a}(K)=\{x^1_{a_1},\ldots,x^\ell_{a_\ell}\}\cup K\cup Y^{\mathbf a}_K$ in $\mathcal T^{(k)}_{\ell,t}(F)$.
\end{defn}

\Cref{fig:R31-suspension} illustrates this construction in its smallest genuinely non-codegree instance, the $(1,0)$-tree suspension $\mathcal T^{(3)}_{1,0}(F)$ for $F$ a single edge (so $k=3$, $\ell=1$, $s=2$).


Similar to choice-tree patterns, the $(\ell,t)$-tree suspension $\mathcal T^{(k)}_{\ell,t}(F)$ also enjoys robustness with respect to the property of containing a copy of $F$ upon deletion of any set intersecting every edge in fewer than $k-t$ vertices.

\begin{lemma}\label{lem:independent-robustness}
Let $S\subseteq V(\mathcal T^{(k)}_{\ell,t}(F))$ be any set containing fewer than $k-t$ vertices of every edge of $\mathcal T^{(k)}_{\ell,t}(F)$. Then $\mathcal T^{(k)}_{\ell,t}(F)-S$ contains a copy of $F$.
\end{lemma}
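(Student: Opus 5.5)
We argue by contradiction-free case analysis, mirroring the proof of \cref{lem:tree-robustness}. Let $S$ meet every edge of $\mathcal T^{(k)}_{\ell,t}(F)$ in fewer than $k-t$ vertices. First look at the $\ell$ root copies $X^1,\dots,X^\ell$. Each is a copy of $F$ whose edges are edges of $\mathcal T^{(k)}_{\ell,t}(F)$. So if some $X^a$ is disjoint from $S$, then $X^a$ survives in $\mathcal T^{(k)}_{\ell,t}(F)-S$ and we are done.

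Otherwise, for every $a\in[\ell]$ we pick a vertex $x^a_{a_a}\in S\cap X^a$, forming an ordered tuple $\mathbf a=(a_1,\dots,a_\ell)\in[f]^\ell$. Then $R:=\{x^1_{a_1},\dots,x^\ell_{a_\ell}\}\subseteq S$ has $|R|=\ell$. Now consider the attached choice-tree pattern $T^{\mathbf a}_{s-t}(F)$ and apply \cref{lem:tree-robustness} with $j=s-t$ (valid since $1\le s-t\le s$ because $0\le t\le s-1$) to $S\cap V(T^{\mathbf a}_{s-t}(F))$. Two outcomes are possible:

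\begin{itemize}
\item The $k$-graph $T^{\mathbf a}_{s-t}(F)-S$ contains a copy of $F$. Its $k$-edges are edges of $\mathcal T^{(k)}_{\ell,t}(F)$, so this copy lies in $\mathcal T^{(k)}_{\ell,t}(F)-S$ and we are done.
\item $S$ spans some $(s-t)$-edge $K\in E_{s-t}(T^{\mathbf a}_{s-t}(F))$. In that case the connecting edge $e^{\mathbf a}(K)=R\cup K\cup Y^{\mathbf a}_K$ satisfies $|S\cap e^{\mathbf a}(K)|\ge |R|+|K|=\ell+s-t=k-t$, because $R$, $K$ and $Y^{\mathbf a}_K$ are pairwise disjoint. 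This contradicts the hypothesis on $S$, so this case cannot occur.
\end{itemize}

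The only points needing care are bookkeeping. The vertex sets of the root copies, of the patterns $T^{\mathbf a}_{s-t}(F)$, and of the sets $Y^{\mathbf a}_K$ are pairwise disjoint by construction, which gives the count $\ell+(s-t)$. Also, a copy of $F$ found inside a sub-pattern is a copy in the whole suspension, since all $k$-edges of the patterns are included. No step is a real obstacle; the substance is already in \cref{lem:tree-robustness}.
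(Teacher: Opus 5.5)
Your proposal is correct and follows the paper's proof essentially step for step. You first use a root copy $X^a$ that avoids $S$ if one exists, and otherwise apply \cref{lem:tree-robustness} to the choice tree $T^{\mathbf a}_{s-t}(F)$ indexed by the chosen vertices; in the second outcome, the resulting $(s-t)$-edge $K$ together with those vertices gives $k-t$ vertices of the connecting edge $e^{\mathbf a}(K)$ inside $S$, a contradiction.
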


\begin{proof}
If $S$ is disjoint from $X^a$ for some $1\leq a\leq \ell$, then the copy of $F$ induced on $X^a$ lies in $\mathcal T^{(k)}_{\ell,t}(F)-S$. 
Suppose instead that $S$ meets every $X^a$. 
Choose $x^a_{b_a}\in S\cap X^a$ for $1\leq a\leq \ell$,
and write $\mathbf b=(b_1,\ldots,b_\ell)$. Consider the attached choice-tree pattern $T^{\mathbf b}_{s-t}(F)$. By \cref{lem:tree-robustness}, either the induced $k$-graph of $T^{\mathbf b}_{s-t}(F)-S$ contains a copy of $F$, in which case $\mathcal T^{(k)}_{\ell,t}(F)-S$ also contains $F$, 
or $S\setminus\{x^1_{b_1},\ldots,x^\ell_{b_\ell}\}$ spans an $(s-t)$-edge $K$ of $T^{\mathbf b}_{s-t}(F)$. In the latter case
$\{x^1_{b_1},\ldots,x^\ell_{b_\ell}\}\cup K$
is a $(k-t)$-subset of the connecting $k$-edge $e^{\mathbf b}(K)$ of $\mathcal T^{(k)}_{\ell,t}(F)$, contradicting that $|S\cap e^{\mathbf b}(K)|<k-t$. Hence $\mathcal T^{(k)}_{\ell,t}(F)-S$ must contain a copy of $F$.
\end{proof}

\begin{figure}[ht]
\centering
\tikzset{
  Rvmain/.style={circle, draw=black, fill=white, inner sep=0pt, minimum size=4.4pt, line width=0.5pt},
  Rvhi/.style={circle, draw=black, fill=black, inner sep=0pt, minimum size=6.2pt, line width=0.5pt},
  Rfcopy/.style={draw=black, line width=0.8pt, rounded corners=6pt, fill=white},
  Rtwoedge/.style={dashed, draw=black!42, line width=0.5pt},
  Rkedge/.style={densely dashed, draw=black, line width=1.4pt},
  Rcleg/.style={draw=black, line width=1.15pt},
  Rcleglt/.style={draw=black!50, line width=0.7pt},
  Rsubbox/.style={draw=black!60, line width=0.6pt, rounded corners=4pt},
  Rblab/.style={font=\footnotesize, text=black!75},
  Rvlab/.style={font=\scriptsize, text=black},
}
\resizebox{\textwidth}{!}{%
\begin{tikzpicture}[x=1cm,y=1cm]

  \foreach \a/\cx in {1/0, 2/6, 3/12}{
    \foreach \i in {1,2,3}{
      \pgfmathsetmacro{\yx}{\cx + (\i-2)*0.5}
      \coordinate (Y\a\i) at (\yx,3);
    }
    \foreach \c in {1,2,3}{
      \pgfmathsetmacro{\lcx}{\cx + (\c-2)*1.7}
      \foreach \b in {1,2,3}{
        \pgfmathsetmacro{\zx}{\lcx + (\b-2)*0.30}
        \coordinate (Z\a\c\b) at (\zx,0);
      }
    }
  }
  \coordinate (X1) at (5.45,6.2);
  \coordinate (X2) at (6.00,6.2);
  \coordinate (X3) at (6.55,6.2);
  \coordinate (H2) at (6.0,4.70);
  \coordinate (H3) at (11.4,4.60);
  \coordinate (Hh) at (1.70,4.00);

  \foreach \a/\cx in {1/0, 2/6, 3/12}{
    \draw[Rsubbox] (\cx-2.30,-0.55) rectangle (\cx+2.30,3.55);
    \node[Rblab, anchor=north] at (\cx,-0.72) {$T_2^{\a}(F)$};
  }

  \begin{scope}[on background layer]
    \draw[Rfcopy] (5.02,5.88) rectangle (6.98,6.52); 
    \foreach \a/\cx in {1/0, 2/6, 3/12}{
      \draw[Rfcopy] (\cx-0.86,2.66) rectangle (\cx+0.86,3.34); 
      \foreach \c in {1,2,3}{
        \pgfmathsetmacro{\lcx}{\cx + (\c-2)*1.7}
        \draw[Rfcopy] (\lcx-0.58,-0.34) rectangle (\lcx+0.58,0.34); 
      }
    }
  \end{scope}

  \foreach \a in {1,2,3}{
    \foreach \c in {1,2,3}{
      \foreach \b in {1,2,3}{ \draw[Rtwoedge] (Y\a\c) -- (Z\a\c\b); }
    }
  }

  \draw[Rcleglt] (H2) -- (X2);
  \draw[Rcleglt] (H2) -- (Y22);
  \draw[Rcleglt] (H2) -- (Z222);
  \draw[Rcleglt] (H3) -- (X3);
  \draw[Rcleglt] (H3) -- (Y32);
  \draw[Rcleglt] (H3) -- (Z322);

  \draw[Rcleg] (Hh) -- (X1);
  \draw[Rcleg] (Hh) -- (Y13);
  \draw[Rcleg] (Hh) -- (Z133);
  \draw[Rkedge] (Y13) -- (Z133); 
  \node[font=\scriptsize, anchor=west] at (1.92,4.18) {$\{x^1_a\}\cup K$};
  \node[font=\scriptsize, anchor=east] at (0.80,1.52) {$K$};

  \foreach \a in {1,2,3}{
    \foreach \i in {1,2,3}{ \node[Rvmain] at (Y\a\i) {}; }
    \foreach \c in {1,2,3}{ \foreach \b in {1,2,3}{ \node[Rvmain] at (Z\a\c\b) {}; } }
  }
  \node[Rvmain] at (X1) {}; \node[Rvmain] at (X2) {}; \node[Rvmain] at (X3) {};
  \node[Rvhi] at (X1) {}; \node[Rvhi] at (Y13) {}; \node[Rvhi] at (Z133) {};

  \node[Rblab, anchor=south] at (6.0,7.02) {root copy $X^1$};
  \node[Rvlab, anchor=south] at (5.45,6.60) {$x^1_1$};
  \node[Rvlab, anchor=south] at (6.00,6.60) {$x^1_2$};
  \node[Rvlab, anchor=south] at (6.55,6.60) {$x^1_3$};
  \node[Rvlab, anchor=south] at (0.40,3.48) {$y_c$};
  \node[Rvlab, anchor=west]  at (2.40,0.02) {$z_i$};
  \node[Rblab, anchor=west]  at (14.45,3.0) {\,root copy of $F$};
  \node[Rblab, anchor=west]  at (14.45,0.0) {\,leaf copies $T_1(F)$};

\end{tikzpicture}%
}
\caption{The $(1,0)$-tree suspension $\mathcal T^{(3)}_{1,0}(F)$ for $F$ a single edge. As $\ell=1$ there is a single root copy $X^1=\{x^1_1,x^1_2,x^1_3\}$ (top), so the construction is a rooted tree. For each $a\in[3]$ an attached choice-tree pattern $T^a_2(F)$ hangs below; it consists of a root copy of $F$ (middle, with vertices written $y_1,\dots,y_f$) each of whose vertices carries a leaf copy $T_1(F)$ (bottom, vertices $z_1,\dots,z_f$). The light dashed segments are the $2$-edges of the choice tree, $K=\{y_c,z_i\}\in E_2(T^a_2(F))$. Since $t=0$ the auxiliary set $Y^{\mathbf a}_K$ is empty, so a connecting edge is $e^{(a)}(K)=\{x^1_a\}\cup K$; the bold three-way fork shows one such edge for $a=1$, joining $x^1_a$, $y_c$ and $z_i$, while the light forks indicate the corresponding edges for $a=2,3$. Filled vertices are those of the displayed connecting edge. Each $T^a_2(F)$ in fact receives one connecting edge for every $K\in E_2(T^a_2(F))$; only one per subtree is drawn.}
\label{fig:R31-suspension}
\end{figure}


\subsection{The two-part lower bounds}\label{sec:two-part}

Let $F$ be a $k$-graph with $\pi_{\ell}(F)=\beta\in[0,1)$. In this subsection, we
provide a lower bound for $\pi_{\ell}(\mathcal T^{(k)}_{\ell,t}(F))$ using the robust property given in \cref{lem:independent-robustness}.

For every integer  $0\leq j\leq s$ and every real $x\in[0,1]$, let 
\[Q_{s,j}(x)=\sum_{i=j}^s\binom{s}{i}x^i(1-x)^{s-i},\]
which is the density of $s$-sets having at least $j$ vertices in a part of relative size $x$.
We use the convention that $\binom mi=0$ when $i<0$ or $i>m$, and set $Q_{s,j}(x)=1$ for $j<0$. 
Let $\gamma\in(0,1)$ be the unique solution of $Q_{s,t+1}(x)=Q_{s,t+1-\ell}(x)-(1-\beta)x^s$. 
Then 
\[\gamma=
\begin{cases}
(1+(1-\beta)^{1/s})^{-1},  &t=0,\\    
(2-\beta)^{-1/s}, & t+1=s\leq \ell, \\
\frac{k-1}{k-\beta}, & t+1=s\text{~and~}\ell=1.
\end{cases}\]
This further gives that 
\[Q_{s,t+1}(\gamma)=
\begin{cases}
1-\left(1+(1-\beta)^{-1/s}\right)^{-s},  &t=0,\\    
\frac{1}{2-\beta}, & t+1=s\leq \ell, \\
\left(\frac{k-1}{k-\beta}\right)^{k-1}, & t+1=s\text{~and~}\ell=1,
\end{cases}\]
which are the densities produced by the transfer functions in \cref{tree-suspension} with the input $\beta$. 

We first show that $Q_{s,t+1}(\gamma)$ is a lower bound of $\pi_{\ell}(\mathcal T^{(k)}_{\ell,t}(F))$.

\begin{lemma}\label{lower-bound}
Let $F$ be a non-empty $k$-graph with $\pi_{\ell}(F)=\beta$. Then 
$\pi_{\ell}(\mathcal T^{(k)}_{\ell,t}(F))\geq Q_{s,t+1}(\gamma)$.     
\end{lemma}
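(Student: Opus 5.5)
We use the two-part construction from \cref{sec:pf-idea}. Fix $\varepsilon>0$ and let $n$ be large. Split $[n]=A\cup B$ with $|A|=\lfloor\gamma n\rfloor$. On $A$ place an $F$-free $k$-graph $G_A$ with $\delta_\ell(G_A)\ge(\beta-\varepsilon)\binom{|A|-\ell}{k-\ell}$; it exists by the definition of $\pi_\ell(F)=\beta$. Let $H$ consist of $E(G_A)$ together with every $k$-set $e\not\subseteq A$ with $|e\cap A|\ge t+1$.

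The first step is $\mathcal T^{(k)}_{\ell,t}(F)$-freeness. Suppose $\phi$ embeds $\mathcal T=\mathcal T^{(k)}_{\ell,t}(F)$ into $H$, and let $S=\phi^{-1}(B)$. Every edge of $H$ meets $B$ in at most $k-t-1$ vertices, so $S$ contains fewer than $k-t$ vertices of every edge of $\mathcal T$. By \cref{lem:independent-robustness}, $\mathcal T-S$ contains a copy of $F$. This copy maps into $A$, and every edge of $H$ inside $A$ lies in $G_A$, so $G_A$ would contain $F$, a contradiction.

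The second step bounds the minimum $\ell$-degree. Take an $\ell$-set $L$ with $|L\cap A|=i$, where $0\le i\le\ell$, and count the $s$-sets $W$ completing $L$ to an edge. Write $j=|W\cap A|$. If $i<\ell$, then $L\cup W\not\subseteq A$, so the edge is present exactly when $i+j\ge t+1$. Up to $o(1)$ errors, the degree density is then $Q_{s,t+1-i}(\gamma)$, which is at least $Q_{s,t+1}(\gamma)$ because $Q_{s,j}$ is decreasing in $j$. If $i=\ell$, the crossing completions need $j\ge t+1-\ell$ with $j<s$. Together with at least $(\beta-\varepsilon)\binom{|A|-\ell}{s}$ completions inside $A$ coming from $G_A$, this gives density at least
\[
Q_{s,t+1-\ell}(\gamma)-\gamma^s+(\beta-\varepsilon)\gamma^s=Q_{s,t+1-\ell}(\gamma)-(1-\beta)\gamma^s-\varepsilon\gamma^s .
\]
By the defining equation of $\gamma$, this equals $Q_{s,t+1}(\gamma)-\varepsilon\gamma^s$. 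Hence $\delta_\ell(H)\ge (Q_{s,t+1}(\gamma)-2\varepsilon)\binom{n-\ell}{s}$ for large $n$. Letting $\varepsilon\to0$ gives the lemma.

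The main obstacle is the bookkeeping in the second step. We must check that counting $s$-subsets of $[n]\setminus L$ by their intersection with $A$ really yields the binomial densities $Q$ up to $o(1)$; this holds because $|A\setminus L|/(n-\ell)\to\gamma$. We must also handle the convention $Q_{s,j}=1$ for $j\le 0$, which covers the case $t+1-i\le 0$. Existence of $\gamma$ is assumed from the preceding discussion. If $\beta$ is only approached, we may also need $|A|\to\infty$ so that near-extremal $F$-free graphs exist, and this holds since $\gamma>0$.
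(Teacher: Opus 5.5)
Your proposal is correct and follows the paper's proof essentially step for step. It uses the same two-part construction: an $F$-free near-extremal $k$-graph on $A$ with $|A|\approx\gamma n$, plus every $k$-set not contained in $A$ that meets $A$ in at least $t+1$ vertices. It gets $\mathcal T^{(k)}_{\ell,t}(F)$-freeness the same way, by applying \cref{lem:independent-robustness} to the vertices mapped into $B$. It also bounds the $\ell$-degree with the same split into $|L\cap A|<\ell$ (using monotonicity of $Q_{s,j}$ in $j$) and $|L\cap A|=\ell$ (closed off by the defining equation of $\gamma$).
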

\begin{proof}
Choose $1/n\ll\eta\ll\varepsilon\ll\beta$ if $\beta>0$. For $\beta=0$, we choose $\eta=0$ and $1/n\ll\varepsilon\ll1$. To prove that $\pi_{\ell}(\mathcal T^{(k)}_{\ell,t}(F))\geq Q_{s,t+1}(\gamma)$, we next construct an $n$-vertex $\mathcal T^{(k)}_{\ell,t}(F)$-free $k$-graph $H$ on vertex set $A\cup B$ with minimum $\ell$-degree at least $(Q_{s,t+1}(\gamma)-\varepsilon)\binom{n}{s}$, where $|A|=\gamma n$ and $|B|=(1-\gamma)n$.

Since $\pi_{\ell}(F)=\beta$, there exists an $F$-free $k$-graph $H'$ on $\gamma n$ vertices with $\delta_{\ell}(H')\geq(\beta-\eta)\binom{\gamma n}{s}$. The $k$-graph $H$ is constructed by putting a copy of $H'$ on vertex set $A$ and, in addition, adding every $k$-set not contained in $A$ that intersects $A$ in at least $t+1$ vertices. 

We first prove that $H$ contains no copy of $\mathcal T^{(k)}_{\ell,t}(F)$. Assume for contradiction that $H$ admits a copy $\mathcal{T'}$ of $\mathcal T^{(k)}_{\ell,t}(F)$. Let $S=V(\mathcal{T'})\cap B$. By the construction of $H$, we have $|S\cap e|\leq k-t-1$ for every $e\in E(\mathcal{T'})$. 
It then follows from \cref{lem:independent-robustness} that $\mathcal{T'}-S$ contains a copy of $F$, which contradicts that $H[A]$ is $F$-free. 

It remains to verify the minimum $\ell$-degree condition.
Let $I$ be an $\ell$-subset of $V(H)$ and set $r=|I\cap A|$. If $r=\ell$, then 
\begin{align}
d_H(I)\geq & (\beta-\eta)\binom{|A|-\ell}{s}+\sum_{i=t+1-\ell}^{s-1}\binom{|A|-r}{i}\binom{|B|-\ell+r}{s-i}\nonumber\\
\geq & \sum_{i=t+1-\ell}^{s}\binom{|A|-r}{i}\binom{|B|-\ell+r}{s-i}-(1-\beta+\eta)\binom{|A|-\ell}{s}\nonumber\\
\geq& \left(Q_{s,t+1-\ell}(\gamma)-(1-\beta)\gamma^s-\varepsilon\right)\binom{n}{s}\nonumber\\
=&\left(Q_{s,t+1}(\gamma)-\varepsilon\right)\binom{n}{s},\nonumber
\end{align}
where the last equality follows from the choice of $\gamma$.
If $r<\ell$, then 
\begin{align}
d_H(I)\geq & \sum_{i=t+1-r}^{s}\binom{|A|-r}{i}\binom{|B|-\ell+r}{s-i}\nonumber\\
\geq & \left(Q_{s,t+1-r}(\gamma)-\varepsilon\right)\binom{n}{s}\nonumber\\
\geq& \left(Q_{s,t+1}(\gamma)-\varepsilon\right)\binom{n}{s}.\nonumber
\end{align}
Therefore, we conclude that $\delta_\ell(H)\geq (Q_{s,t+1}(\gamma)-\varepsilon)\binom{n}{s}$, finishing the proof.
\end{proof}


\subsection{$(\ell,0)$-tree suspensions}
Recall that when $t=0$
$$\gamma=\frac{1}{1+(1-\beta)^{1/s}}\text{~~~and~~~} Q_{s,t+1}(\gamma)=1-\left(1+(1-\beta)^{-1/s}\right)^{-s}=\Phi_{k,\ell}(\beta).$$
Therefore, it follows from \cref{lower-bound} that $\pi_{\ell}(\mathcal T^{(k)}_{\ell,0}(F))\geq\Phi_{k,\ell}(\beta)$ for every non-empty $k$-graph $F$ with $\pi_\ell(F)=\beta$.
We next prove that $\Phi_{k,\ell}(\beta)$ is also an upper bound of $\pi_\ell(\mathcal T^{(k)}_{\ell,0}(F))$, and thus $\Phi_{k,\ell}(x)$ is a transfer function of $\Pi^k_{\ell}$ for all $1\leq\ell<k$.

\begin{theorem}\label{all-degree}
Let $F$ be a non-empty $k$-graph with $\pi_\ell(F)=\beta$. Then $\pi_{\ell}(\mathcal T^{(k)}_{\ell,0}(F))=\Phi_{k,\ell}(\beta)$.    
\end{theorem}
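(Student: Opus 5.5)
The plan is to match the lower bound of \cref{lower-bound} with an embedding argument. Fix $\varepsilon>0$, let $n$ be large, and let $H$ be an $n$-vertex $k$-graph with $\delta_\ell(H)\ge(\Phi_{k,\ell}(\beta)+\varepsilon)\binom{n}{s}$. We must find a copy of $\mathcal T^{(k)}_{\ell,0}(F)$ in $H$. Write $\Phi=\Phi_{k,\ell}(\beta)$ and $c=(1-\Phi)^{1/s}$. A direct computation from the formula for $\Phi_{k,\ell}$ gives
\[
c=\frac{1}{1+(1-\beta)^{-1/s}}.
\]
Put $w^*=c/(1-\beta)^{1/s}$. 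Then $w^*+c=1$, that is, $(1-w^*)^s=1-\Phi$, and this identity will drive the whole proof.

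\emph{Step 1: a subset lemma.} If $W\subseteq V(H)$ has $|W|\ge(w^*+\varepsilon')n$, then $H[W]$ contains a copy of $F$ avoiding any prescribed set of $O(1)$ vertices. To see this, take an $\ell$-set inside $W$. It loses at most $(1-|W|^s/n^s+o(1))\binom ns$ edges that leave $W$, so its degree in $H[W]$ is at least
\[
\bigl(\Phi+\varepsilon-1+(w^*+\varepsilon')^s\bigr)\binom ns .
\]
Since $(w^*)^s(1-\beta)=1-\Phi$, this is at least $(\beta+\varepsilon'')\binom{|W|}{s}$. The definition of $\pi_\ell(F)=\beta$ then yields $F$ in $H[W]$, even after deleting $O(1)$ vertices.

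\emph{Step 2: recursive embedding of choice trees inside links.} For a $(k-j)$-set $J$, let $L_J$ be its link $j$-graph, and define thresholds $d_j$ by $1-d_j=(1-w^*-\varepsilon')^j$, with $d_0=0$. The claim, proved by induction on $j$, is the following: if $|L_J|\ge d_j\binom nj$, then $T_j(F)$ embeds into $H$ (disjoint from a prescribed $O(1)$-set) so that for every $j$-edge $K$ of the pattern, $J\cup K\in E(H)$, and all $k$-edges of the pattern are edges of $H$.

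For $j=1$, the set $W=L_J$ has size at least $(w^*+\varepsilon')n$, so Step 1 gives a copy of $F$ inside $W$, which is exactly what is needed. For $j>1$, let
\[
W=\Bigl\{v:\ |L_{J\cup\{v\}}|\ge d_{j-1}\tbinom{n}{j-1}\Bigr\}.
\]
Double counting, $j|L_J|=\sum_v|L_{J\cup\{v\}}|$, gives approximately $d_j\le w+(1-w)d_{j-1}$, where $w=|W|/n$. Hence $1-w\le(1-d_j)/(1-d_{j-1})=1-w^*-\varepsilon'$, so $w\ge w^*+\varepsilon'$. Step 1 places the root copy of $F$ inside $W$. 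For each root vertex $x_a$ we then apply the induction hypothesis to $J\cup\{x_a\}$ to hang $T^a_{j-1}(F)$ disjointly, avoiding previously used vertices; this uses only $O(1)$ vertices in total. The $j$-edges $\{x_a\}\cup K$ then satisfy $J\cup\{x_a\}\cup K\in E(H)$, as required.

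\emph{Step 3: assembly.} Since $\Phi>\beta$, $H$ contains $\ell$ disjoint copies of $F$, which serve as the roots $X^1,\dots,X^\ell$. For each $\mathbf a\in[f]^\ell$, the $\ell$-set $I=\{x^1_{a_1},\dots,x^\ell_{a_\ell}\}$ has $|L_I|\ge(\Phi+\varepsilon)\binom ns\ge d_s\binom ns$, provided $\varepsilon'$ is small enough that $(1-w^*-\varepsilon')^s\ge1-\Phi-\varepsilon$. Step 2 with $j=s$ then embeds $T^{\mathbf a}_s(F)$ disjointly from everything used so far, and since $t=0$ the connecting edges $I\cup K$ are present. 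We do this for all $f^\ell$ tuples in turn.

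The main obstacle is Step 2: the averaging recursion must lose nothing. The crucial point is that the threshold recursion $1-d_j=(1-w)(1-d_{j-1})$ closes exactly at $1-d_s=1-\Phi$. This holds precisely because $w^*$ is the threshold at which Step 1 works, and $w^*+c=1$ is the defining equation of $\Phi_{k,\ell}$. The remaining care is in handling error terms, namely the $o(1)$ from ordered versus unordered counting and the avoidance of the $O(1)$ used vertices, uniformly over the bounded depth $s$.
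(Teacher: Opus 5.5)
Your proposal is correct and follows essentially the same route as the paper: the lower bound comes from \cref{lower-bound}, a subset lemma finds $F$ in any set of relative size above $w^*=\gamma$, and $T_j(F)$ is embedded inductively inside links using the double-counting set $W$ with geometric thresholds $1-d_j=(1-w^*-\varepsilon')^j$, which match the paper's $\rho_j=1-\theta^j$ with $\theta=c$. The differences are only technical: you build the error margin into the ratio $c-\varepsilon'$ and carry a prescribed avoidance set through the induction, whereas the paper deletes used vertices and shrinks the margin $\xi$ at each level.
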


We need two embedding lemmas for the proof of \cref{all-degree}. The first one enables us to find a copy of $F$ in a large vertex subset.

\begin{lemma}\label{lem:large-subsets-contain-F}
Let $F$ be a non-empty $k$-graph with $\pi_\ell(F)=\beta$. For every $\varepsilon>0$ and $\rho\in(0,1]$, the following holds for all sufficiently large $n$. Suppose that $H$ is an $n$-vertex $k$-graph with
$\delta_\ell(H)\ge (1-(1-\beta)\rho^s+\varepsilon)\binom{n-\ell}{s}$,
and $U\subseteq V(H)$ satisfies that $|U|\ge \rho n$, then $H[U]$ contains a copy of $F$.
\end{lemma}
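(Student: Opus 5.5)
Consider the induced $k$-graph $H[U]$; we may shrink $U$ to have size exactly $m=\lceil\rho n\rceil$, since a copy of $F$ in a subset is a copy in $U$. The plan is to show that $H[U]$ has $\ell$-degree density strictly above $\beta$ on average, and then to pass to a subgraph with large minimum $\ell$-degree, which must contain $F$ by the definition of $\pi_\ell(F)=\beta$.

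\textbf{Step 1: counting missing edges.} For an $\ell$-set $I\subseteq U$, the number of $s$-sets $J\subseteq V(H)\setminus I$ with $I\cup J\notin E(H)$ is at most $\bigl((1-\beta)\rho^s-\varepsilon\bigr)\binom{n-\ell}{s}$. All these missing edges could lie inside $U$, so we get
\[
d_{H[U]}(I)\ \ge\ \binom{m-\ell}{s}-\bigl((1-\beta)\rho^s-\varepsilon\bigr)\binom{n-\ell}{s}.
\]
Since $\binom{n-\ell}{s}\rho^s=(1+o(1))\binom{m-\ell}{s}$, this gives
\[
d_{H[U]}(I)\ \ge\ \Bigl(\beta+\tfrac{\varepsilon}{\rho^s}-o(1)\Bigr)\binom{m-\ell}{s}\ \ge\ \Bigl(\beta+\tfrac{\varepsilon}{2}\Bigr)\binom{m-\ell}{s}.
\]
So the bound holds for every $\ell$-subset of $U$, not just on average. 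In other words, $\delta_\ell(H[U])\ge(\beta+\varepsilon/2)\binom{m-\ell}{s}$.

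\textbf{Step 2: conclude.} Because $\pi_\ell(F)=\beta$, we have $\ex_\ell(m,F)\le(\beta+\varepsilon/4)\binom{m-\ell}{s}$ for all sufficiently large $m$. This applies here since $m\ge\rho n$ and $n$ is large. Hence $H[U]$ is not $F$-free.

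The only care needed is in the asymptotics of Step 1: the $o(1)$ comparison between $\rho^s\binom{n-\ell}{s}$ and $\binom{m-\ell}{s}$ must be absorbed by $\varepsilon$. This is fine for fixed $\rho,\varepsilon$ and $n$ large, using $\binom{m-\ell}{s}/\binom{n-\ell}{s}\to\rho^s$.

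The main potential obstacle would be if the hypothesis held only on average, but it is a minimum-degree condition, so no cleaning step is needed. When $\beta=0$ the same argument works verbatim.
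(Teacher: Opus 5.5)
Your proposal is correct and is essentially the paper's argument. The paper likewise shrinks $U$ and bounds the non-edges at each $\ell$-set of $U$ by the global deficiency, $\delta_\ell(H[U])\ge\delta_\ell(H)-\bigl(\binom{n-\ell}{s}-\binom{|U|-\ell}{s}\bigr)\ge(\beta+\varepsilon/2)\binom{|U|-\ell}{s}$, and then concludes directly from $\pi_\ell(F)=\beta$.
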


\begin{proof}
Without loss of generality, we may assume that $|U|=\rho n$. Note that
\[
        \delta_\ell({H[U]})
        \ge \delta_\ell(H)-\left(\binom{n-\ell}{s}-\binom{|U|-\ell}{s}\right)\geq \left(\beta+\frac{\varepsilon}{2}\right)\binom{|U|-\ell}{s}
\]
for sufficiently large $n$.
Since $\pi_{\ell}(F)=\beta$,
there exists a copy of $F$ in $H[U]$.
\end{proof}

In the remainder of this subsection, let $F$ be a non-empty $k$-graph with $\pi_\ell(F)=\beta$. For simplicity, set 
\[
        \theta=(1-\Phi_{k,\ell}(\beta))^{1/s}
        =\frac{(1-\beta)^{1/s}}{1+(1-\beta)^{1/s}}
        \text{~~~and~~~}
        \rho_j=1-\theta^j\quad(1\le j\le s).
\]
Thus $\rho_s=\Phi_{k,\ell}(\beta)$ and $\rho_1=1-\theta=\gamma$.
Given a $k$-graph $H$ and a $j$-graph $J$ on a common vertex set, we say that $(H,J)$ contains a $(k,j)$-pattern $P$ if there is an injection $\psi:V(P)\to V(H)$ such that $\psi(e)\in E(H)$ and 
$\psi(e')\in E(J)$ for all $e\in E_k(P), e'\in E_j(P)$. 

The subsequent lemma provides a sufficient condition for the existence of the choice-tree pattern $T_j(F)$ in a $(k,j)$-pattern.

\begin{lemma}\label{lem:recursive-tree-embedding}
For every $\varepsilon>0$,
$0<\xi<\frac12\min\{\theta^{s},\varepsilon\}$
and $1\le j\le s$, the following holds for all sufficiently large $n$. 
Let $H$ be an $n$-vertex $k$-graph with
$\delta_\ell(H)\ge (\Phi_{k,\ell}(\beta)+\varepsilon)\binom{n-\ell}{s}$, and
let $J$ be a $j$-graph on $V(H)$ with
$|E(J)|\ge (\rho_j+\xi)\binom{n}{j}$.
Then $(H,J)$ contains the $(k,j)$-pattern $T_j(F)$.
\end{lemma}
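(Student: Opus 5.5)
Induction on $j$, mirroring the recursive definition of $T_j(F)$.

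\textbf{Base case $j=1$.} Here $J$ is a set $U$ of vertices with $|U|\ge(\rho_1+\xi)n=(1-\theta+\xi)n$. A $T_1(F)$ in $(H,J)$ is just a copy of $F$ inside $U$. We apply \cref{lem:large-subsets-contain-F} with $\rho=1-\theta$. Since $\gamma=1-\theta$ satisfies $(1-\beta)\gamma^s=(1-\beta)(1-\theta)^s$, we need $1-(1-\beta)(1-\theta)^s\le \Phi_{k,\ell}(\beta)=1-\theta^s$, that is, $(1-\beta)(1-\theta)^s\ge\theta^s$. This holds with equality, because $\theta/(1-\theta)=(1-\beta)^{1/s}$. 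The $\varepsilon$ slack in $\delta_\ell(H)$ then gives the required copy of $F$.

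\textbf{Inductive step $j>1$.} Call a vertex $v$ \emph{good} if its link $J_v=\{K\in V(H)^{(j-1)}: \{v\}\cup K\in E(J), v\notin K\}$ has at least $(\rho_{j-1}+\xi')\binom{n}{j-1}$ edges, for a suitable $\xi'$ (e.g.\ $\xi'=\xi/2$; the constants are chosen in a hierarchy over $j$). Double counting gives
\[
j|E(J)|=\sum_v |J_v| \le |G|\binom{n-1}{j-1}+(n-|G|)(\rho_{j-1}+\xi')\binom{n-1}{j-1},
\]
where $G$ is the set of good vertices. With $|E(J)|\ge(\rho_j+\xi)\binom nj$ this yields $|G|/n\ge (\rho_j-\rho_{j-1})/(1-\rho_{j-1})+\Omega(\xi)$. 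Since $1-\rho_j=\theta^j$, the ratio is $(\theta^{j-1}-\theta^j)/\theta^{j-1}=1-\theta=\rho_1$. So $|G|\ge(\rho_1+\xi'')n$, where the margin $\xi''>0$ comes from $\xi$; this is where the condition $\xi<\theta^s/2$ keeps the quantities positive.

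\textbf{Embedding.} By the base case, $H[G]$ contains a copy of $F$ on vertices $x_1,\dots,x_f$; this is the root copy. For each $a$ in turn, apply the induction hypothesis to $(H,J_{x_a}')$, where $J_{x_a}'$ is $J_{x_a}$ with all $(j-1)$-sets meeting the $O(1)$ already used vertices removed. This removes only $O(n^{j-2})$ edges, so the density stays above $\rho_{j-1}+\xi'/2$. The result is a copy of $T_{j-1}(F)$ disjoint from everything used so far. Its $(j-1)$-edges $K$ lie in $J_{x_a}$, so each $\{x_a\}\cup K\in E(J)$, exactly as the definition of $T_j(F)$ requires. The $H$-edges come from the root copy and from the subtrees.

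\textbf{Main obstacle.} The delicate point is the averaging step: showing that the exact telescoping $(\rho_j-\rho_{j-1})/(1-\rho_{j-1})=\rho_1$ survives with a positive margin. This requires tracking how $\xi$ degrades across the $s$ levels, so I would fix $\xi_j$ as a decreasing sequence with $\xi_{j-1}$ chosen small relative to $\xi_j$ and $\theta$, and take $n$ large enough for all the levels at once.
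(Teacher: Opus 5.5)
Your proposal is correct and is essentially the paper's proof. Both argue by induction on $j$. The base case applies \cref{lem:large-subsets-contain-F} with $\rho=\rho_1=\gamma$, so that $1-(1-\beta)\rho_1^s=\Phi_{k,\ell}(\beta)$ and the slack comes from $\varepsilon$. The inductive step averages, using $(\rho_j-\rho_{j-1})/(1-\rho_{j-1})=\rho_1$, to find at least $\rho_1 n$ vertices of large $J$-degree, places a root copy of $F$ among them, and greedily embeds the subtrees $T^a_{j-1}(F)$ in the links.

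The differences are cosmetic. The paper uses the same $\xi$ in the degree threshold; no margin in $|U|$ is needed, since $|U|\ge\rho_1 n$ already suffices. It also deletes the used vertices from $H$ itself, whereas you delete them only from the link. Your disjointness claim then tacitly relies on the true fact that every vertex of $T_{j-1}(F)$ lies in some $(j-1)$-edge, which you should state.
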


\begin{proof}
We prove the statement by induction on $j$. 
For $j=1$, the $1$-graph $J$ is a vertex set of size at least $(\rho_1+\xi)n$. 
Recall that $\rho_1=\gamma$ and $\Phi_{k,\ell}(\beta)=Q_{s,1}(\gamma)=1-(1-\beta)\gamma^s$ (by the definition of $\gamma$).
Therefore, it follows from \cref{lem:large-subsets-contain-F} that $H[J]$ contains a copy of $F$, which gives an embedding of $T_1(F)$ into $(H,J)$ (since the $1$-edges of $T_1(F)$ are all its vertices).

Now let $j\ge2$ and assume the statement for $j-1$. We first embed the root copy of $F$ in $T_j(F)$.
Let 
$U=\left\{v\in V(J) : d_J(v)\ge (\rho_{j-1}+\xi)\binom{n-1}{j-1}\right\}.$
We claim that $|U|\geq \rho_1 n$.
On one hand,
$$\sum_{v\in V(H)}d_J(v)=j|E(J)|\ge j(\rho_j+\xi)\binom{n}{j}=(\rho_j+\xi)\,n\binom{n-1}{j-1}.$$
On the other hand,
$$\sum_{v\in V(H)}d_J(v)=\sum_{v\in U}d_J(v)+\sum_{v\notin U}d_J(v)
\le |U|\binom{n-1}{j-1}+(n-|U|)(\rho_{j-1}+\xi)\binom{n-1}{j-1}.$$
Therefore,

\[
        (\rho_j+\xi)\,n\binom{n-1}{j-1}
        \le |U|\binom{n-1}{j-1}+(n-|U|)(\rho_{j-1}+\xi)\binom{n-1}{j-1}.
\]
This yields that
\[
        |U|
        \ge \frac{\rho_j-\rho_{j-1}}{1-\rho_{j-1}-\xi}n
        =\frac{\theta^{j-1}\rho_1}{\theta^{j-1}-\xi}n
        \geq\rho_1 n,
\]
where the equality follows from that $\rho_j-\rho_{j-1}=\theta^{j-1}(1-\theta)=\theta^{j-1}\rho_1$ and $1-\rho_{j-1}=\theta^{j-1}$, and the last inequality follows from the choice of $\xi$. 
By \cref{lem:large-subsets-contain-F}, $H[U]$ contains a copy of $F$.  

Denote by $X=\{x_1,\ldots,x_f\}$ the vertex set of the copy of $F$ in $H[U]$.
We now embed the $f$ attached pairwise disjoint choice-tree patterns $T^a_{j-1}(F)$ greedily for $a=1,2,\ldots,f$ such that $\{x_a\}\cup K$ is a $j$-edge of $T_j(F)$ for every $(j-1)$-edge in the copy of $T^a_{j-1}(F)$. 
When embedding $T^a_{j-1}(F)$, the set of vertices already occupied (by the root copy of $F$ together with the copies of $T^1_{j-1}(F),\ldots,T^{a-1}_{j-1}(F)$) has size at most $f+f|V(T_{j-1}(F))|$. Let $H'$ be obtained from $H$ by deleting these used vertices, and let $J'_a$ be the $(j-1)$-graph obtained by restricting the link $L_J(x_a)$ to $V(H')$. Then for sufficiently large $n$,
$|E(J'_a)|\geq (\rho_{j-1}+\xi/2)\binom{n}{j-1}$ as $x_a\in U$ and $\delta_{\ell}(H')\geq(\Phi_{k,\ell}(\beta)+\varepsilon/2)\binom{n}{s}$.
Applying the induction hypothesis with margin $\xi/2$ in place of $\xi$ and $\varepsilon/2$ in place of $\varepsilon$, we find a copy of $T^a_{j-1}(F)$ in $(H',J'_a)$ which is disjoint from all previously used vertices. Since every $(j-1)$-edge $K$ of this copy is contained in $L_J(x_a)$, it follows that $\{x_a\}\cup K\in J$. 
After $f$ iterations we have all copies of the $f$ choice-tree patterns $T^1_{j-1}(F),\ldots,T^{f}_{j-1}(F)$, which together with the copy of $F$ in $H[U]$ yield an embedding of $T_j(F)$ into $(H,J)$.
\end{proof}

We are now ready to prove \cref{all-degree}. 

\begin{proof}[Proof of \cref{all-degree}]
It follows from \cref{lower-bound} that $\pi_{\ell}(\mathcal T^{(k)}_{\ell,0}(F))\geq\Phi_{k,\ell}(\beta)$.
We next prove that $\pi_{\ell}(\mathcal T^{(k)}_{\ell,0}(F))\leq\Phi_{k,\ell}(\beta)$.
Let $0<\varepsilon<1$ and $H$ be an $n$-vertex $k$-graph with
$\delta_\ell(H)\ge (\Phi_{k,\ell}(\beta)+\varepsilon)\binom{n-\ell}{s}$.
We next show that $H$ contains $\mathcal T^{(k)}_{\ell,0}(F)$ if $n$ is sufficiently large.

Note that $\Phi_{k,\ell}(\beta)>\beta=\pi_{\ell}(F)$. Therefore, we can find $\ell$ pairwise disjoint copies of $F$ in $H$ greedily if $n$ is sufficiently large. Let $X^1,\ldots,X^\ell$ be the vertex sets of these copies of $F$ with $X^a=\{x^a_1,\ldots,x^a_f\}$ for $1\leq a\leq \ell$.
For $\mathbf a=(a_1,\ldots,a_\ell)\in[f]^\ell$, let $X_{\mathbf a}=\{x^1_{a_1},\ldots,x^\ell_{a_\ell}\}$ and $L_{\mathbf a}=L_H(X_{\mathbf a})$. 
By the definition of $\mathcal T^{(k)}_{\ell,0}(F)$, to embed it into $H$, it now suffices to find disjoint copies of the $f^\ell$ 
choice-tree patterns $T^{\mathbf a}_s(F)$ in $(H-\cup_{a=1}^\ell X^a, L_{\mathbf a}-\cup_{a=1}^\ell X^a)$, as this ensures that $X_{\mathbf a}\cup K\in E(H)$ for every $s$-edge $K$ in the copy of $T^{\mathbf a}_s(F)$ and every $\mathbf a\in[f]^\ell$.

Now fix an $\ell$-tuple $\mathbf a\in[f]^\ell$ for which the required copy of $T^{\mathbf a}_s(F)$ is not determined. Recall that $\rho_s=\Phi_{k,\ell}(\beta)$.
Then the minimum $\ell$-degree assumption gives 
$|E(L_{\mathbf a})|=d_H(X_{\mathbf a})\ge (\rho_s+\varepsilon)\binom{n-\ell}{s}$.
Let $H'$ and $L'_{\mathbf a}$ be obtained from $H$ and $L_{\mathbf a}$ by deleting the set of vertices already used which has size at most $\ell f+f^\ell|V(T_s(F))|$. 
Then 
$$\delta_\ell(H')\geq (\Phi_{k,\ell}(\beta)+\varepsilon/2)\binom{n-\ell}{s}\text{~~~and~~~}|E(L'_{\mathbf a})|\ge (\rho_s+\varepsilon/2)\binom{n}{s}$$
for sufficiently large $n$. 
Choose $0<\xi<\min\{\varepsilon/4,\theta^s/2\}=\frac12\min\{\theta^s,\varepsilon/2\}$. Since $|E(L'_{\mathbf a})|\ge (\rho_s+\xi)\binom{n}{s}$ for large $n$, applying \cref{lem:recursive-tree-embedding} with $\varepsilon/2$ in place of $\varepsilon$ gives a copy of $T^{\mathbf a}_s(F)$ in $(H',L'_{\mathbf a})$. After $f^\ell$ iterations we have all the desired copies of choice-tree patterns.

Thus every sufficiently large $H$ with normalized minimum $\ell$-degree at least $\Phi_{k,\ell}(\beta)+\varepsilon$ contains $\mathcal T^{(k)}_{\ell,0}(F)$. Since $\varepsilon>0$ is arbitrary,
$\pi_\ell(\mathcal T^{(k)}_{\ell,0}(F))\le \Phi_{k,\ell}(\beta)$.
\end{proof}


\subsection{$(\ell,s-1)$-tree suspensions with $\ell\geq k/2$}

Recall that when $t+1=s\leq\ell$
$$\gamma=\frac{1}{(2-\beta)^{1/s}}\text{~~~and~~~} Q_{s,t+1}(\gamma)=\frac{1}{2-\beta}=1-(1-\beta)\gamma^s=\gamma^s=\Psi(\beta).$$
This subsection is devoted to proving the following theorem, which implies that $\Psi(x)$ is a transfer function of $\Pi_{\ell}^k$ whenever $\ell\geq k/2$.

\begin{theorem}\label{support-density}
Let $F$ be a non-empty $k$-graph with $\pi_\ell(F)=\beta$. Then  $\pi_{\ell}(\mathcal T^{(k)}_{\ell,s-1}(F))=\Psi(\beta)$ if $\ell\geq k/2$.  
\end{theorem}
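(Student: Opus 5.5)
The lower bound $\pi_\ell(\mathcal T^{(k)}_{\ell,s-1}(F))\ge \Psi(\beta)$ is already given by \cref{lower-bound}, since $t+1=s\le\ell$ gives $Q_{s,s}(\gamma)=\gamma^s=\frac1{2-\beta}=\Psi(\beta)$. So the plan is to prove the matching upper bound by a direct embedding. Fix $\varepsilon>0$ and let $H$ be a large $n$-vertex $k$-graph with $\delta_\ell(H)\ge(\Psi(\beta)+\varepsilon)\binom{n-\ell}{s}$.

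For $t=s-1$ the choice trees are just $T_1(F)$, i.e.\ copies of $F$ all of whose vertices are $1$-edges. Hence $\mathcal T^{(k)}_{\ell,s-1}(F)$ consists of the following pieces:
\begin{itemize}
\item $\ell$ disjoint root copies $X^1,\dots,X^\ell$ of $F$;
\item for every $\mathbf a\in[f]^\ell$, a further disjoint copy $Z_{\mathbf a}$ of $F$;
\item for every $z\in Z_{\mathbf a}$, a fresh $(s-1)$-set $Y$ with $X_{\mathbf a}\cup\{z\}\cup Y\in E(H)$.
\end{itemize}

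Since $\Psi(\beta)>\beta$, we first embed the root copies $X^1,\dots,X^\ell$ of $F$ greedily. Next we process the tuples $\mathbf a\in[f]^\ell$ one at a time, always working in $H$ minus the $O(1)$ vertices already used; this costs only $o(n^s)$ in every degree count.

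For a fixed $\mathbf a$, let $X_{\mathbf a}=\{x^1_{a_1},\dots,x^\ell_{a_\ell}\}$ and define the support set
\[
U_{\mathbf a}=\left\{v\notin X_{\mathbf a}: d_H(X_{\mathbf a}\cup\{v\})\ge c\binom{n}{s-1}\right\}
\]
for a small constant $c\ll\varepsilon$. The key estimate is that almost all of the link $L_H(X_{\mathbf a})$ lies inside $U_{\mathbf a}^{(s)}$. Indeed, every link edge meeting $W=V\setminus U_{\mathbf a}$ is counted by some $w\in W$, and each such $w$ contributes at most $c\binom{n}{s-1}$ of them. Therefore
\[
(\Psi(\beta)+\varepsilon)\binom{n-\ell}{s}\le \binom{|U_{\mathbf a}|}{s}+cn\binom{n}{s-1},
\]
which gives $|U_{\mathbf a}|\ge(\Psi(\beta)^{1/s}+\varepsilon')n=(\gamma+\varepsilon')n$.

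Because $1-(1-\beta)\gamma^s=\Psi(\beta)$, \cref{lem:large-subsets-contain-F} with $\rho=\gamma$ (applied after deleting used vertices) yields a copy $Z_{\mathbf a}$ of $F$ inside $U_{\mathbf a}$, avoiding all vertices used so far. Finally, for each $z\in Z_{\mathbf a}$ the $(s-1)$-graph $L_H(X_{\mathbf a}\cup\{z\})$ has $\Omega(n^{s-1})$ edges. Removing the $O(1)$ used vertices kills only $O(n^{s-2})$ of these edges, so we can greedily pick a fresh $(s-1)$-set $Y^{\mathbf a}_z$ in that link. This produces the connecting edges $X_{\mathbf a}\cup\{z\}\cup Y^{\mathbf a}_z$. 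After handling all $f^\ell$ tuples we have a copy of $\mathcal T^{(k)}_{\ell,s-1}(F)$, so $\pi_\ell\le\Psi(\beta)+\varepsilon$ for every $\varepsilon>0$.

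The main point to get right is the support-set estimate. The naive averaging $\sum_v d(X_{\mathbf a}\cup v)=s\,d(X_{\mathbf a})$ only gives $|U_{\mathbf a}|\gtrsim\Psi(\beta)n$, which is too weak. One must instead use that the whole link is essentially contained in $U_{\mathbf a}^{(s)}$, which gives the stronger bound $|U_{\mathbf a}|^s\gtrsim\Psi(\beta)n^s$. The hypothesis $\ell\ge k/2$ (that is, $s\le\ell$) is really used only in the lower-bound computation of $\gamma$, where it ensures $t+1-\ell\le0$. I would double-check that the upper-bound argument indeed does not need it; if some hidden dependence appears, the natural place for it to enter is the degree condition when applying \cref{lem:large-subsets-contain-F} to $H[U_{\mathbf a}]$.
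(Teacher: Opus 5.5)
Your proposal is correct and follows essentially the same route as the paper. The paper also takes the set of vertices of high degree in the link of $X_{\mathbf a}$ and shows it has size at least $\gamma n$ by comparing link edges inside it against those meeting its complement; it then finds $F$ there via \cref{lem:large-subsets-contain-F} with $\rho=\gamma$ and extends each vertex greedily to a fresh connecting edge. Your remark that $\ell\ge k/2$ is needed only to make the lower bound from \cref{lower-bound} equal $\Psi(\beta)$, and not for the upper-bound embedding, is also accurate.
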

\begin{proof}[Proof of \cref{support-density}]
Let $\ell\geq k/2$. It follows from \cref{lower-bound} that $\pi_{\ell}(\mathcal T^{(k)}_{\ell,s-1}(F))\geq\Psi(\beta)$.
We next prove that $\pi_{\ell}(\mathcal T^{(k)}_{\ell,s-1}(F))\leq\Psi(\beta)$.
Let $0<\varepsilon<1$ and $H$ be an $n$-vertex $k$-graph with
$\delta_\ell(H)\ge (\Psi(\beta)+\varepsilon)\binom{n-\ell}{s}$.
We next show that $H$ contains $\mathcal T^{(k)}_{\ell,s-1}(F)$ if $n$ is sufficiently large.

Note that $\Psi(\beta)>\beta=\pi_\ell(F)$. Therefore, we can find $\ell$ pairwise disjoint copies of $F$ in $H$ greedily if $n$ is sufficiently large. Let $X^1,\ldots,X^\ell$ be the vertex sets of these copies of $F$ with $X^a=\{x^a_1,\ldots,x^a_f\}$ for $1\leq a\leq \ell$. For $\mathbf a=(a_1,\ldots,a_\ell)\in[f]^\ell$, let $X_{\mathbf a}=\{x^1_{a_1},\ldots,x^\ell_{a_\ell}\}$ and $L_{\mathbf a}=L_H(X_{\mathbf a})$.

Let $F^+$ be the $(k,s)$-pattern such that the underlying $k$-graph is a copy of $F$ and the underlying $s$-graph is a matching of size $f$ with each edge containing exactly one vertex in the copy of $F$. 
Then by the definition of $\mathcal T^{(k)}_{\ell,s-1}(F)$, to embed it into $H$, it suffices to find a copy $F^+_{\mathbf a}$ of the $(k,s)$-pattern $F^+$ in $(H-\cup_{a=1}^\ell X^a, L_{\mathbf a}-\cup_{a=1}^\ell X^a)$ for each $\mathbf a\in[f]^\ell$ such that $F^+_{\mathbf x}$ and $F^+_{\mathbf y}$ are vertex disjoint for all distinct $\mathbf x, \mathbf y \in[f]^\ell$.

Now fix an $\ell$-tuple $\mathbf a\in[f]^\ell$ for which $F^+_{\mathbf a}$ is not determined.
Let $H'$ and $L'_{\mathbf a}$ be obtained from $H$ and $L_{\mathbf a}$ by deleting the set of vertices already used which has size at most $\ell f+f^\ell|V(F^+)|$. 
The minimum degree assumption gives
\[
|E(L'_{\mathbf a})|\ge 
\left(\Psi(\beta)+\frac{\varepsilon}{2}\right)\binom ns
= \left(\gamma^s+\frac{\varepsilon}{2}\right)
\binom ns
\]
for large $n$. 
Choose $\eta\ll\varepsilon$, and let 
\[
U=
\left\{v\in V(L'_{\mathbf a}) : d_{L'_{\mathbf a}}(v)\ge \eta\binom{n-1}{s-1}\right\}.
\]
Then we have 
\[
\binom{|U|}{s}+n\cdot\eta\binom{n-1}{s-1}
\geq \left(\gamma^s+\frac{\varepsilon}{2}\right)\binom ns,
\]
which yields that $|U|\geq \gamma n$.
Note that 
\[
\delta_\ell(H')\geq \left(\Psi(\beta)+\frac{\varepsilon}{2}\right)\binom ns
= \left(1-(1-\beta)\gamma^s+\frac{\varepsilon}{2}\right)
\binom ns.
\]
Therefore, it follows from \cref{lem:large-subsets-contain-F} that $H'[U]$ contains a copy of $F$.
Since every vertex in $U$ has degree at least $\eta\binom{n-1}{s-1}$ in $L'_{\mathbf a}$, we can greedily find $f$ pairwise disjoint edges in $L'_{\mathbf a}$, each containing exactly one prescribed vertex from the copy of $F$ in $H'[U]$ and avoiding the other vertices of that copy. 
Thus we find the desired $F^+_{\mathbf a}$ in $(H',L'_{\mathbf a})$.

After $f^\ell$ iterations we have all the desired copies of $F^+$, which together with the $\ell$ root copies of $F$ form a copy of $\mathcal T^{(k)}_{\ell,s-1}(F)$. This completes the proof.
\end{proof}

\subsection{$(1,k-2)$-tree suspensions}

In the preceding subsection, we established that $\pi_{\ell}(\mathcal T^{(k)}_{\ell,s-1}(F))=Q_{s,s}(\gamma)$ for $\ell\geq k/2$. A key ingredient of the proof was locating a copy of $F$ within a suitably sized vertex subset by virtue of the minimum 
$\ell$-degree condition (\cref{lem:large-subsets-contain-F}). 
For $\ell<k/2$, however, the value of $Q_{s,t+1}(\gamma)$ falls below $1-(1-\beta)\gamma^s$, which renders \cref{lem:large-subsets-contain-F} inapplicable. 

Adopting a substantially different approach that combines the Lagrangian method with a finite-family form of Sidorenko's extension theorem, we show that the equality $\pi_{\ell}(\mathcal T^{(k)}_{\ell,s-1}(F))=Q_{s,s}(\gamma)$ remains true for $\ell=1$. 
Recall that
$$Q_{s,s}(\gamma)=\left(\frac{k-1}{k-\beta}\right)^{k-1}=\Upsilon_k(\beta)$$
when $t+1=s$ and $\ell=1$.
Thus $\Upsilon_k(x)$ is a transfer function of $\Pi^k_{1}$.

\begin{theorem}\label{turan-density}
Let $F$ be a non-empty $k$-graph with $\pi(F)=\beta$. Then  $\pi(\mathcal {T}^{(k)}_{1,k-2}(F))=\Upsilon_k(\beta)$.      
\end{theorem}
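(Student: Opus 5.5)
The plan is to take the lower bound from \cref{lower-bound} and to prove the matching upper bound $\pi(\mathcal T^{(k)}_{1,k-2}(F))\le\Upsilon_k(\beta)$ via Lagrangians. For $\ell=1$ and $t=k-2$ we have $s=k-1$ and $s-t=1$, so $\mathcal T:=\mathcal T^{(k)}_{1,k-2}(F)$ has a simple structure:
\begin{itemize}
\item a root copy of $F$ on $X=\{x_1,\dots,x_f\}$;
\item for each $x_a$, an attached copy $F_a$ of $F$;
\item for each vertex $z\in V(F_a)$, a connecting edge $\{x_a,z\}\cup Y$ with $Y$ a fresh $(k-2)$-set.
\end{itemize}
So $\mathcal T$ is obtained from the $k$-graph $\mathcal C$ (the root copy together with the $F_a$) by adding, for certain pairs $\{x_a,z\}$, a private edge through fresh vertices. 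This is exactly the shape handled by Sidorenko's extension theorem.

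First I would apply a finite-family form of Sidorenko's theorem. I would show that $\pi(\mathcal T)$ is at most the supremum of $k!\,\lambda(G)$ over all finite $k$-graphs $G$ that cover pairs and contain no homomorphic image of $\mathcal C$ in which every designated pair $\{x_a,z\}$ is covered by an edge. I would verify this by the standard route. Take a $\mathcal T$-free $H$ of density $\Upsilon_k(\beta)+\varepsilon$ and pass to a dense blow-up structure by supersaturation. The fresh $(k-2)$-sets $Y$ can be found greedily whenever the corresponding pair has codegree $\Omega(n^{k-2})$. Hence it suffices to work in the \emph{pair-covering} reduced graph, where each edge records a pair of large codegree.

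Second, I would fix such a $G$ and an optimal weighting $w$, taking $G$ minimal so that $w$ has full support and every pair is covered (Frankl--R\"odl). I would then set
\[
U=\{v : \text{the link-neighbourhood of $v$ in $G$ spans a copy of $F$ of positive weight}\}.
\]
If $G[U]$ contained (the blow-up of) $F$, each root vertex $x_a\in U$ would have an attached $F_a$ in its neighbourhood, yielding the forbidden configuration. So $G[U]$ is $F$-free. The same holds for the neighbourhood $N(x)$ of every $x\notin U$, since otherwise $x$ would lie in $U$. Writing $a=w(U)$, the contribution of edges inside $U$ is at most $\beta a^k/k!+o(1)$. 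The aim is to bound the remaining edges by the "$(k-1)$ vertices in $U$" count, namely $(1-a)a^{k-1}/(k-1)!$. This would give
\[
k!\,\lambda(G)\le \beta a^k+k(1-a)a^{k-1}.
\]
Maximizing over $a$ yields $a=(k-1)/(k-\beta)$, and the maximum value is
\[
a^{k-1}\bigl(k-a(k-\beta)\bigr)=a^{k-1}=\Upsilon_k(\beta),
\]
as required.

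The main obstacle is justifying that edges using two or more vertices outside $U$ contribute nothing beyond this bound. For this I would use the Lagrangian optimality condition that all vertices have equal weighted link value $k\lambda$. Consider $x\notin U$. The weighted link of $x$ lives on $N(x)$, which is $F$-free. I would compare $x$ with the optimal $U$-configuration: moving $x$'s weight into the "$B$-type" role, where the link is all $(k-1)$-sets inside $U$, should not decrease $\lambda$. An inductive or symmetrization argument should then show that an optimal $G$ has the two-part form of \cref{lower-bound}. If this direct comparison fails, my fallback is to induct on $|V(G)\setminus U|$ and merge two non-$U$ vertices that lie in a common edge. This is where I expect the real work to be.
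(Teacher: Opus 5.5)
You are in line with the paper on the lower bound and on your first step. The paper likewise reduces, via the finite-family Sidorenko theorem (Proposition~\ref{lem:extension}), to bounding $k!\lambda(G)$ over $G$ with no homomorphic image of $F$ plus the attached copies $F_a$ avoiding $x_a$; it encodes this as the family $\mathcal M$. The gap is in your second step. The inequality you aim for is false, not merely hard to prove. You want the edges not contained in $U$ to have total weight at most $(1-a)a^{k-1}/(k-1)!$, with $a=w(U)$.

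A counterexample: take $F=G=K^{(k)}_k$, so $\beta=0$.
\begin{itemize}
\item $G$ covers pairs and is admissible, since every homomorphic copy of $F$ in $G$ uses all $k$ vertices, so no $F_a$ can avoid $x_a$.
\item The optimal weighting is uniform with full support.
\item For every $v$, the set $V(G)\setminus\{v\}$ has only $k-1$ vertices and carries no homomorphic copy of $F$. Hence $U=\varnothing$ and $a=0$.
\end{itemize}
Your bound then gives $k!\lambda(G)\le 0$, while $k!\lambda(G)=k!/k^k>0$. So no symmetrization or merging argument can prove the displayed inequality for this $U$, and the fallback is a different, unspecified proof.

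The missing idea is that you do not need $U$. You already observed that $G-x$ is $F$-free for every $x\notin U$, and one such vertex is enough.
\begin{itemize}
\item If $F\not\to G$, then blow-ups of $G$ are $F$-free, so $k!\lambda(G)\le\beta$.
\item Otherwise take a homomorphism $\phi:F\to G$. Admissibility of $G$ forces some $w=\phi(i)$ with $F\not\to G-w$.
\end{itemize}
Write $y=1-x_w$. Edges avoiding $w$ contribute at most $\beta y^k$ to $k!\sum_{e}\prod_{v\in e}x_v$, by homogeneity of the Lagrangian of the $F$-hom-free graph $G-w$. Edges through $w$ contribute at most $k x_w y^{k-1}$. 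Hence
\[
k!\lambda(G)\le \beta y^k+k(1-y)y^{k-1}\le\Upsilon_k(\beta).
\]
This is exactly your final optimization with $y$ in place of $a$, and it is the paper's argument. Throughout, including in your definition of $U$, "$F$-free" must mean "admits no homomorphism from $F$", because the bound $\beta y^k$ comes from blowing up $G-w$.
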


To prove \cref{turan-density}, a central tool we employ is Sidorenko's extension theorem formulated for finite families of 
$k$-graphs. 
We begin by introducing standard definitions and notation.
Given a $k$-graph, its \textit{extension} $\operatorname{Ext}(F)$ is constructed by attaching a set $X_{uv}$ of $k-2$ new vertices to each uncovered pair $\{u,v\}$ (i.e., $d_F(\{u,v\})=0$), and then adding the edge $\{u,v\}\cup X_{uv}$.
The \textit{Lagrangian} of a $k$-graph $H$ is given by
\[
\lambda(H)=\max\left\{\sum_{e\in E(H)}\prod_{v\in e}x_v : x_v\ge0,\ \sum_{v\in V(H)}x_v=1\right\}.
\]
For a finite family $\mathcal{F}$ of $k$-graphs, let $\operatorname{Ext}(\mathcal{F})=\{\operatorname{Ext}(F) : F\in\mathcal{F}\}$ and $\pi_\lambda(\mathcal F)=\sup\{k!\lambda(H) : H\text{ is }\mathcal F\text{-free}\}$.

The result below is the finite-family form of Sidorenko's extension theorem, whose proof is the same as in the classical single-graph case; see, for example, \cite{sidorenko1987,SidorenkoExtension}. 

\begin{proposition}\label{lem:extension}
Let $\mathcal{F}$ be a finite family of $k$-graphs. Then $\pi(\operatorname{Ext}(\mathcal{F}))=\pi_\lambda(\mathcal F)$. 
\end{proposition}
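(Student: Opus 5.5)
We prove the two inequalities $\pi(\operatorname{Ext}(\mathcal F))\ge\pi_\lambda(\mathcal F)$ and $\pi(\operatorname{Ext}(\mathcal F))\le\pi_\lambda(\mathcal F)$ separately, following Sidorenko's argument for a single graph and checking that nothing uses $|\mathcal F|=1$.

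For the lower bound, fix an $\mathcal F$-free $k$-graph $G$ on $[m]$ and an optimal weight vector $x$ for $\lambda(G)$. Let $H_n$ be the blow-up of $G$: replace each vertex $i$ by a class $V_i$ of size $\lfloor x_in\rfloor$, and take all $k$-sets with one vertex in each $V_{i_1},\dots,V_{i_k}$ for $\{i_1,\dots,i_k\}\in E(G)$. Then $e(H_n)=(\lambda(G)+o(1))n^k$, so $H_n$ has edge density $k!\lambda(G)-o(1)$.

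We claim $H_n$ is $\operatorname{Ext}(F)$-free for every $F\in\mathcal F$. Suppose $\phi$ embeds $\operatorname{Ext}(F)$ into $H_n$, and let $p$ send each vertex to the index of its class. Every pair in the core $V(F)$ is covered in $\operatorname{Ext}(F)$, either by an edge of $F$ or by an extension edge. Vertices in a common edge of $H_n$ lie in distinct classes, so $p\circ\phi$ is injective on $V(F)$. Edges of $F$ map to edges of $G$, hence $G$ contains a copy of $F$, a contradiction. Letting $G$ range over all $\mathcal F$-free graphs gives $\pi(\operatorname{Ext}(\mathcal F))\ge\pi_\lambda(\mathcal F)$.

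For the upper bound, take an $\operatorname{Ext}(\mathcal F)$-free $H$ on $n$ vertices. Let $c=\max_{F\in\mathcal F}|V(\operatorname{Ext}(F))|$; finiteness of $\mathcal F$ makes $c$ well defined. Using uniform weights, $\lambda(H)\ge e(H)/n^k$, so it suffices to bound $\lambda(H)$. We do this by passing to a subgraph $H'$ that is $\mathcal F$-free and satisfies $\lambda(H')\ge\lambda(H)-o(1)$.

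The standard route is as follows. First remove $o(n^k)$ edges so that every pair of vertices either lies in no edge or lies in at least $\delta n^{k-2}$ edges, by repeatedly deleting edges through pairs of small codegree. Call the resulting graph $H'$. Suppose $H'$ contained a copy of some $F\in\mathcal F$. Each pair of this copy that is uncovered in $F$ is still covered in $H'$ (the key point to verify), so it has codegree at least $\delta n^{k-2}$. We then choose the extension sets $X_{uv}$ greedily and disjointly, avoiding the at most $c$ used vertices, which yields $\operatorname{Ext}(F)\subseteq H$, a contradiction.

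The case analysis in Sidorenko's proof handles exactly the "key point" above, in the Lagrangian formulation. Among optimal weightings of $H$, choose one with minimal support. By the Frankl--Rödl lemma, every pair in the support is then covered by an edge of $H$ restricted to the support. Next, apply supersaturation and blow-up invariance of $\pi_\lambda$ to the restricted graph: if it contained $F$, then since its pairs are all covered, a blow-up of it would contain $\operatorname{Ext}(F)$.

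We expect this supersaturation step to be the main obstacle, namely converting "covered pairs" into many disjoint extension paths. It is handled uniformly over the finitely many $F$ by taking $\delta$ and $n$ depending on $c$ and $|\mathcal F|$. Together the two bounds give $\pi(\operatorname{Ext}(\mathcal F))=\pi_\lambda(\mathcal F)$.
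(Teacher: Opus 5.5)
Your lower bound is correct and standard: the core of $\operatorname{Ext}(F)$ is pairwise covered, so it projects injectively into $G$. The upper bound has a genuine gap in how the ingredients are put together.

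(i) Bounding $\lambda(H)$ is the wrong target. The Lagrangian is monotone under subgraphs and is not controlled by edge density, so an $\operatorname{Ext}(\mathcal F)$-free $H$ can have $k!\lambda(H)$ far above $\pi_\lambda(\mathcal F)$. For example, take $k=3$ and $\mathcal F=\{M\}$ with $M$ two disjoint edges. Then $K^{(3)}_{14}$ plus isolated vertices is $\operatorname{Ext}(M)$-free, since $|V(\operatorname{Ext}(M))|=15$. Yet $3!\lambda(K^{(3)}_{14})=39/49>12/25=\pi_\lambda(M)$ (Hefetz--Keevash). Also $\lambda(H')\ge\lambda(H)-o(1)$ fails, because cleaning deletes every edge of a bounded clique.

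(ii) The cleaned graph $H'$ need not be $\mathcal F$-free, and your ``key point'' is false for an arbitrary copy of $F$. The disjoint union of two balanced blow-ups of $K^{(3)}_5$ has all codegrees $0$ or linear. It is $\operatorname{Ext}(M)$-free, since the six pairwise-covered core vertices would need six distinct parts of a single component. Yet it contains $M$ with all nine cross pairs uncovered.

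(iii) You apply Frankl--R\"odl to $H$ itself, where a pair in the support may be covered by only one edge. Then ``a blow-up of $H[S]$ contains $\operatorname{Ext}(F)$'' only gives a homomorphism $\operatorname{Ext}(F)\to H$. That does not contradict $\operatorname{Ext}(\mathcal F)$-freeness; the $K^{(3)}_{14}$ example is exactly this situation.

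The missing step is to run the minimal-support argument on the \emph{cleaned} graph and compare with edge density, not with $\lambda(H)$. Suppose $e(H)\ge(\pi_\lambda(\mathcal F)+\varepsilon)\binom nk$ and $\delta\ll\varepsilon$.
\begin{enumerate}
\item Uniform weights give $k!\lambda(H')\ge k!e(H')/n^k>\pi_\lambda(\mathcal F)$ for large $n$.
\item Take an optimal weighting of $H'$ with minimal support $S$. Then $\lambda(H'[S])=\lambda(H')$, so $H'[S]$ contains some $F\in\mathcal F$.
\item By Frankl--R\"odl, every pair inside $S$ is covered in $H'$, hence has codegree at least $\delta n^{k-2}$.
\item Only now does the greedy choice of disjoint sets $X_{uv}$ work: the at most $c$ used vertices block at most $c\,n^{k-3}$ edges through a given pair. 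This gives $\operatorname{Ext}(F)\subseteq H$.
\end{enumerate}
So the crux is not the greedy extension you flagged, which is routine. It is obtaining a copy of $F$ all of whose pairs are covered in $H'$, which the support of an optimal weighting of $H'$ provides. This is the classical Sidorenko argument the paper defers to.

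Alternatively, your blow-up sentence can be made rigorous another way. First use supersaturation to replace $\operatorname{Ext}(\mathcal F)$ by the finite family of its homomorphic images, which has the same Tur\'an density. Then take the minimal support in a graph free of all such images.
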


 We now give the proof of \cref{turan-density}.

\begin{proof}[Proof of \cref{turan-density}]
Let $F$ be a non-empty $k$-graph on vertex set $[f]$ with $\pi(F)=\beta$.
The lower bound follows from \cref{lower-bound}.
To prove the upper bound, we consider the following auxiliary finite family of $k$-graphs.

Let $\mathcal M$ be the finite family of all $k$-graphs $M$
for which there are homomorphisms
\[
\phi:F\to M,
\qquad
\phi_i:F\to M-\phi(i)\quad (\forall i\in[f]),
\]
and $M$ is the union of the images of these homomorphisms.  
Clearly, such $k$-graphs $M$ exist. For instance, one can take $M$ to be the union of $f+1$ disjoint copies of $F$.

\begin{claim}\label{auxiliary}
For every $M\in\mathcal{M}$, there is a homomorphism from $\mathcal {T}^{(k)}_{1,k-2}(F)$ to $\operatorname{Ext}(M)$.
\end{claim}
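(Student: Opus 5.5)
The plan is to write down the homomorphism explicitly, exploiting the special shape of $\mathcal T^{(k)}_{1,k-2}(F)$. Here $\ell=1$, $s=k-1$, $t=k-2$, so $s-t=1$.

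\textbf{Unwinding the definition.} \cref{con:robust-tree-suspension} produces the following $k$-graph.
\begin{itemize}
\item There is a single root copy of $F$ on $X=\{x_1,\dots,x_f\}$.
\item For each $a\in[f]$ there is an attached copy $T^{a}_1(F)$. This is just a disjoint copy of $F$ on a vertex set $Z^a=\{z^a_1,\dots,z^a_f\}$, and its $1$-edges are all singletons $\{z^a_i\}$.
\item For each $a$ and each $i\in[f]$ there is a connecting edge $\{x_a,z^a_i\}\cup Y^{a}_{i}$, where $Y^a_i$ is a private set of $k-2$ new vertices.
\end{itemize}
So $\mathcal T^{(k)}_{1,k-2}(F)$ is $f+1$ copies of $F$, together with pendant edges. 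Each pendant edge joins the pair $x_a,z^a_i$ through $k-2$ vertices that lie in no other edge.

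\textbf{Defining the map.} Fix $M\in\mathcal M$ with homomorphisms $\phi:F\to M$ and $\phi_a:F\to M-\phi(a)$. Define $h:V(\mathcal T^{(k)}_{1,k-2}(F))\to V(\operatorname{Ext}(M))$ in three steps.
\begin{enumerate}
\item Set $h(x_a)=\phi(a)$ on the root copy.
\item Set $h(z^a_i)=\phi_a(i)$ on the $a$-th attached copy.
\item Since $\phi_a$ maps into $M-\phi(a)$, the vertices $u=\phi(a)$ and $v=\phi_a(i)$ are distinct. Now split into two cases.
\begin{itemize}
\item If the pair $\{u,v\}$ is covered in $M$, choose an edge $e\in E(M)$ containing it. Map $Y^a_i$ bijectively onto $e\setminus\{u,v\}$, which has exactly $k-2$ elements.
\item If $\{u,v\}$ is uncovered in $M$, then $\operatorname{Ext}(M)$ contains the edge $\{u,v\}\cup X_{uv}$. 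Map $Y^a_i$ bijectively onto $X_{uv}$.
\end{itemize}
\end{enumerate}

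\textbf{Checking it is a homomorphism.} The map is well defined because the sets $X$, $Z^a$ and $Y^a_i$ are pairwise disjoint, and each $Y^a_i$ belongs to exactly one edge. So the choices made for different pendant edges never conflict.

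Each edge of the root copy or of an attached copy of $F$ is sent to an edge of $M$. This holds because $\phi$ and the $\phi_a$ are homomorphisms, and $M\subseteq\operatorname{Ext}(M)$.

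Each connecting edge $\{x_a,z^a_i\}\cup Y^a_i$ is sent onto $k$ distinct vertices forming an edge of $\operatorname{Ext}(M)$. In the first case this edge is $e$; in the second it is $\{u,v\}\cup X_{uv}$. Hence $h$ is a homomorphism.

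\textbf{The key point.} The only delicate part is ensuring that the image of each pendant edge has $k$ distinct vertices. This needs $u\neq v$, which is exactly why the definition of $\mathcal M$ requires $\phi_a$ to map into $M-\phi(a)$. Once $u\neq v$ is known, the argument is a direct verification.
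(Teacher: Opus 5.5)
Your proposal is correct and is essentially the paper's proof: map the root copy by $\phi$, the $a$-th attached copy by $\phi_a$, and use $\phi_a(i)\neq\phi(a)$ to send each private $(k-2)$-set onto the rest of an edge of $\operatorname{Ext}(M)$ covering the pair $\{\phi(a),\phi_a(i)\}$. The only difference is that you split explicitly into the ``covered in $M$'' and ``uncovered in $M$'' cases, whereas the paper simply observes that every pair of vertices of $M$ is covered by some edge of $\operatorname{Ext}(M)$.
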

\begin{proof}
We use the same symbols as in the definition of $(\ell,t)$-tree suspensions. Fix $M \in \mathcal{M}$ and homomorphisms $\phi, \phi_1, \dots, \phi_f$ as in the definition of $\mathcal{M}$. Then map the root copy of $F$ in $\mathcal {T}^{(k)}_{1,k-2}(F)$ by $\phi$, and map the attached copy $F_i$ by $\phi_i$ for each $i \in [f]$.

It remains to map the $k-2$ vertices in $Y_{v}^i$ for every $i\in[f]$ and $v\in V(F_i)$. Since $\phi_i(v) \neq \phi(i)$, the pair $\{\phi(i), \phi_i(v)\}$ is a pair of vertices of $M$. 
Since in $\operatorname{Ext}(M)$ the pair $\{\phi(i), \phi_i(v)\}$ is covered by an edge $e$, we can map the $k-2$ vertices of $Y_{v}^i$ to the $k-2$ vertices of $e\setminus \{\phi(i), \phi_i(v)\}$. 
Then the edge 
$\{i, v\} \cup Y_v^i$ is mapped to an edge of $\operatorname{Ext}(M)$. This gives a homomorphism from $\mathcal {T}^{(k)}_{1,k-2}(F)$ to $\operatorname{Ext}(M)$.
\end{proof}

It is well known that the Tur\'an density of a $k$-graph equals its homomorphic Tur\'an density. Hence, \cref{lem:extension} and \cref{auxiliary} yield that 
$$\pi(\mathcal {T}^{(k)}_{1,k-2}(F))\leq\pi(\operatorname{Ext}(\mathcal M))=\pi_{\lambda}(\mathcal{M}).$$
We next show that $\pi_\lambda(\mathcal M)\leq\Upsilon_k(\beta)$, and thus finish the proof.

Let $H$ be an $\mathcal M$-free $k$-graph, and fix a nonnegative weighting $(x_v)_{v\in V(H)}$ with $\sum_{v\in V(H)} x_v=1$. 
If $F$ has no homomorphism to $H$, then $k!\sum_{e\in E(H)}\prod_{v\in e}x_v\le\beta<\Upsilon_k(\beta)$ (otherwise a large weighted blow-up of $H$ would be
$F$-free and have edge density exceeding $\beta$). 
Now suppose that there is a homomorphism $\phi:F\to H$.  Since $H$ is $\mathcal M$-free, for some
$i\in[f]$ we have $F\not\to H-\phi(i)$. 

Write $w=\phi(i)$, $x=x_w$ and $y=1-x$. Since $F\not\to H-w$, the contribution of edges not containing $w$ in the sum $k!\sum_{e\in E(H)}\prod_{v\in e}x_v$ is at most $\beta y^k$. 
On the other hand, the contribution of edges containing $w$
is at most $kxy^{k-1}$.  Therefore, we have
\[
k!\sum_{e\in E(H)}\prod_{v\in e}x_v\le \beta y^k+kxy^{k-1}=\beta y^k+k(1-y)y^{k-1}.
\]
The right-hand side is maximized on $[0,1]$ at
$y=(k-1)/(k-\beta)$, and its maximum is
$\Upsilon_k(\beta)$.  Taking the supremum over all $\mathcal M$-free $k$-graphs and all
weightings proves the upper bound.
\end{proof}

\section{Algebraic complexity: Proof of \cref{thm:degree-high-degree}}\label{sec:support}

\subsection{Algebraic preliminaries}

Throughout this subsection, all fields are viewed as subfields of
$\mathbb C$. We write $\overline{\mathbb Q}$ for the algebraic closure
of $\mathbb Q$, and $\mathbb Q(X)$ for the field of rational functions
in the indeterminate $X$ with rational coefficients.

For an algebraic number $a$, let $\mathbb Q(a)$ denote the smallest
subfield of $\mathbb C$ containing both $\mathbb Q$ and $a$.
A \emph{number field} is a finite extension of $\mathbb Q$.
If $\mathbb K\subseteq \mathbb L$ are number fields, then
\[
  [\mathbb L:\mathbb K]:=\dim_{\mathbb K} \mathbb L
\]
denotes the degree of the field extension $\mathbb L/\mathbb K$. In particular, the
algebraic degree of $a$ over $\mathbb Q$ is
\[
  \deg_{\mathbb Q}(a)
  :=[\mathbb Q(a):\mathbb Q],
\]
equivalently, the degree of the minimal polynomial of $a$ over
$\mathbb Q$. We shall use the tower law
\[
  [\mathbb L:\mathbb Q]=[\mathbb L:\mathbb K][\mathbb K:\mathbb Q].
\]
All field embeddings into $\mathbb C$ are understood to fix
$\mathbb Q$.

We now give two arithmetic lemmas for a general rational function. The first says that an integer translate retains the current number field;
the second makes the next radical extension have full degree.

\begin{lemma}\label{lem:rational-primitive-shifts}
Let $R\in\mathbb Q(X)$ be non-constant, and let $\mathbb K=\mathbb Q(u)$ be a
number field.  Then, for all but finitely many integers $q$ for which
$R(u+q)$ is defined, $\mathbb Q\bigl(R(u+q)\bigr)=\mathbb K$.
\end{lemma}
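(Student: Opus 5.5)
The plan is to compare $\mathbb Q(R(u+q))$ with $\mathbb K$ through field embeddings. Let $d=[\mathbb K:\mathbb Q]$ and let $\sigma_1,\dots,\sigma_d$ be the distinct embeddings $\mathbb K\to\mathbb C$, with $\sigma_1$ the identity. Write $u_i=\sigma_i(u)$; these are pairwise distinct because $u$ generates $\mathbb K$. Since $R$ has rational coefficients and $q\in\mathbb Z$, every $\sigma_i$ fixes $R$ and $q$. Whenever $R(u+q)$ is defined, it lies in $\mathbb K$, so $\mathbb Q(R(u+q))\subseteq\mathbb K$. Moreover, if $u+q$ is not a pole of $R$, then no conjugate $u_i+q$ is a pole either, because the set of poles of $R$ is Galois-stable. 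Hence $\sigma_i(R(u+q))=R(u_i+q)$.

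Step one is the standard criterion: an element $a\in\mathbb K$ generates $\mathbb K$ if and only if the values $\sigma_i(a)$, $i=1,\dots,d$, are pairwise distinct. The reason is that $[\mathbb Q(a):\mathbb Q]$ equals the number of distinct restrictions of the $\sigma_i$ to $\mathbb Q(a)$, which is the number of distinct values $\sigma_i(a)$. So it suffices to show that for all but finitely many integers $q$,
\[
R(u_i+q)\neq R(u_j+q)\qquad\text{for all } i\neq j .
\]

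Step two handles one fixed pair $i\neq j$ and sets $c=u_j-u_i\neq0$. The relevant object is the rational function $G(X)=R(X+c)-R(X)\in\mathbb C(X)$, and the key claim is that $G$ is not identically zero. If it were, then $R(X+c)=R(X)$ and $R$ would be periodic with a nonzero period. This is impossible for a non-constant rational function. If $R$ has a pole $p$, then $p+mc$ is a pole for every $m\in\mathbb Z$, giving infinitely many poles. If $R$ is a polynomial of degree $n\ge1$, then $R(X)-R(0)$ vanishes at every $mc$, giving infinitely many roots. Either way we get a contradiction, so $G\not\equiv0$.

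Now $R(u_j+q)=R(u_i+q)$ means exactly that $u_i+q$ is a zero of $G$. Since $G$ is a nonzero rational function, it has finitely many zeros, so this happens for only finitely many integers $q$. A union over the finitely many pairs $(i,j)$ still excludes only finitely many $q$, and for the remaining $q$ with $R(u+q)$ defined we get $\mathbb Q(R(u+q))=\mathbb K$.

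The only step with real content is showing that $G$ is not identically zero, which is the non-periodicity of non-constant rational functions. I also expect some bookkeeping in tracking where the expressions are defined. This is handled by noting that a pole of $R$ at a conjugate $u_i+q$ occurs exactly when it occurs at $u+q$, and that is the case the statement already excludes.
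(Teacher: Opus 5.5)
Your proposal is correct and follows essentially the same route as the paper. Both arguments reduce to showing that for each pair of distinct embeddings $\sigma\ne\tau$ of $\mathbb K$, the equality $R(\sigma(u)+q)=R(\tau(u)+q)$ holds for only finitely many $q$, using the fact that a non-constant rational function has no non-zero additive period; your differences are cosmetic (counting zeros of $R(X+c)-R(X)$ directly, and handling the polynomial case by root-counting on $R(X)-R(0)$ instead of the paper's leading-term computation), and both are fine.
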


\begin{proof}
The assertion is immediate if $\mathbb K=\mathbb Q$, so assume $\mathbb K\ne\mathbb Q$.
For an integer $q$ with $R(u+q)$ defined, put
$A_q=R(u+q)$.  If $\mathbb Q(A_q)\ne \mathbb K$, then separability gives two
distinct embeddings $\sigma,\tau:\mathbb K\hookrightarrow\mathbb C$ which agree
on $\mathbb Q(A_q)$.  Since $R$ has rational coefficients,
\[
        R\bigl(\sigma(u)+q\bigr)
        =R\bigl(\tau(u)+q\bigr).
\]
Fix $\sigma\ne\tau$.  If this equality held for infinitely many integers
$q$, then the two rational functions
$R(X+\sigma(u))$ and $R(X+\tau(u))$ would be identical.  Hence, with
$c=\sigma(u)-\tau(u)\ne0$, we would have
\[
        R(X+c)=R(X).
\]
A non-constant rational function over a field of characteristic zero has
no non-zero additive period: a finite pole would generate infinitely many
poles under translation by $c$, while a rational function with no finite
pole is a polynomial, and for a polynomial of degree $d\ge1$ the leading
term of $R(X+c)-R(X)$ has degree $d-1$ and non-zero coefficient.  Thus
only finitely many $q$ arise from each pair $\{\sigma,\tau\}$, and there are
only finitely many such pairs.
\end{proof}
If $R=N/D\in\mathbb Q(X)$, where $N,D\in\mathbb Q[X]$ are
coprime, then $c\in\overline{\mathbb Q}$ is a simple zero of $R$
if $N(c)=0$ and $N'(c)D(c)\ne 0$,
and is a simple pole of $R$ if $D(c)=0$
and $D'(c)N(c)\ne 0$.
For a number field $\mathbb K$, we write $\mathcal O_{\mathbb K}$ for its ring of
integers, and $v_{\mathfrak p}$ for the normalized additive valuation
associated with a nonzero prime ideal
$\mathfrak p\subseteq\mathcal O_{\mathbb K}$. We write $\mu_m$ for the set of
$m$-th roots of unity.
\begin{lemma}\label{lem:rational-radical-shifts}
Let $R\in\mathbb Q(X)$ have a simple zero or a simple pole at some
$c\in\overline{\mathbb Q}$.  Let $\mathbb K$ be a number field, let $u\in \mathbb K$, and
let $s\ge2$.  Then there are infinitely many integers $q\ge0$ such that
\[
        X^s-R(u+q)
\]
is irreducible over $\mathbb K$.
\end{lemma}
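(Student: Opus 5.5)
The plan is to use the classical irreducibility criterion for binomials (Capelli/Vahlen): for a field $\mathbb K$ of characteristic zero and $a\in\mathbb K$, the polynomial $X^s-a$ is irreducible over $\mathbb K$ provided $a\notin\mathbb K^p$ for every prime $p\mid s$ and, when $4\mid s$, $a\notin-4\mathbb K^4$. A convenient sufficient condition is the following. Suppose there is a nonzero prime $\mathfrak p\subseteq\mathcal O_{\mathbb K}$ with $v_{\mathfrak p}(a)=\pm1$. Then $a$ is not a $p$-th power for any prime $p$, since $p$-th powers have valuation divisible by $p$. Also $a\notin -4\mathbb K^4$ as long as $\mathfrak p\nmid 2$, because elements of $-4\mathbb K^4$ have $\mathfrak p$-valuation divisible by $4$ at such $\mathfrak p$. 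So it suffices to find infinitely many $q\ge0$ for which $A_q:=R(u+q)$ has valuation exactly $\pm1$ at some prime $\mathfrak p$ of $\mathbb K$ not dividing $2$. Since $u\in\mathbb K$ and $R\in\mathbb Q(X)$, we have $A_q\in\mathbb K$ whenever it is defined.

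First I handle the simple zero case. I would pass to a number field $\mathbb L\supseteq\mathbb K$ containing $c$, and then return to $\mathbb K$ by a norm/ramification computation, which I describe next.

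Write $R=N/D$ with $N,D$ coprime and, after clearing denominators, with integer coefficients. Factor $N=g\cdot N_1$ over $\mathbb Q$, where $g$ is the minimal polynomial of $c$. Because $c$ is a simple zero, $g\nmid N_1$ and $g\nmid D$.

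Consider the integer sequence, or more precisely the $\mathcal O_{\mathbb K}$-valued sequence after scaling, $P(q)=g(u+q)\in\mathbb K$, where $g(u+X)\in\mathbb K[X]$ is non-constant. The standard Schur/Nagell-type argument shows that infinitely many primes $\mathfrak p$ of $\mathbb K$ divide some value $g(u+q)$ with $q\ge0$. I would apply it to the norm $\mathrm{Nm}_{\mathbb K/\mathbb Q}\,g(u+X)\in\mathbb Q[X]$: a non-constant integer polynomial has infinitely many prime divisors among its values at nonnegative integers. For such a $\mathfrak p$, I use a Hensel-type shift. If $v_{\mathfrak p}(g(u+q))\ge2$, replace $q$ by $q+\pi$ with $\pi\in\mathbb Z\cap\mathfrak p$ a rational prime $p$. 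Then
\[
  g(u+q+p)\equiv g(u+q)+p\,g'(u+q)\pmod{p^2}.
\]
So the valuation drops to exactly $v_{\mathfrak p}(p)$, and this equals $1$ provided $\mathfrak p$ is unramified over $p$ and $\mathfrak p\nmid g'(u+q)$.

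I then discard the finitely many bad primes: those ramified in $\mathbb K$, those dividing $2$, those dividing the leading coefficients, those dividing the resultants $\mathrm{Res}(g,g')$, $\mathrm{Res}(g,N_1)$ and $\mathrm{Res}(g,D)$, and those dividing the denominators of $u$. Outside this set, $\mathfrak p\mid g(u+q)$ forces $\mathfrak p\nmid g'(u+q)N_1(u+q)D(u+q)$. Hence $v_{\mathfrak p}(A_q)=v_{\mathfrak p}(g(u+q))=1$ for a suitable residue class of $q$ modulo $p^2$, and that class contains infinitely many $q\ge0$.

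The simple pole case is identical with $D$ in place of $N$, giving $v_{\mathfrak p}(A_q)=-1$. Equivalently, apply the zero case to $1/R$ and note that $X^s-a$ is irreducible iff $X^s-a^{-1}$ is.

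The main obstacle is the local step: guaranteeing valuation exactly $1$ rather than merely positive. This needs both the simplicity of the zero, which makes $g'$ coprime to $g$, and a prime $\mathfrak p$ unramified over $\mathbb Q$ so that shifting $q$ by the rational prime $p$ lowers the valuation to $1$. Making precise that infinitely many such unramified $\mathfrak p$ divide values $g(u+q)$ is where I expect the most care. I would do it via the norm polynomial and Schur's theorem, excluding the finitely many ramified primes.
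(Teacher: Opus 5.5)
Your proof is correct. Its endgame is the same as the paper's: find a prime $\mathfrak p\nmid 2$ of $\mathbb K$ and a whole residue class of $q$ modulo $p^2$ on which $v_{\mathfrak p}(R(u+q))=\pm1$, then conclude by Capelli's criterion. Where you differ is in how the prime and the starting integer are produced.

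\emph{The paper's route.} It enlarges to a Galois field $\mathbb M$ containing $\mathbb K$ and $c$, and uses Chebotarev to get a rational prime $p$ splitting completely in $\mathbb M$. The residue field at $\mathfrak P\mid p$ is then $\mathbb F_p$, so it can choose an integer $q_0$ with $u+q_0\equiv c\pmod{\mathfrak P}$ and run the Taylor step at $c$ itself.

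\emph{Your route.} You stay inside $\mathbb K$ throughout. You replace $c$ by its minimal polynomial $g$, a multiplicity-one irreducible factor of $N$ (or of $D$). Schur's theorem applied to $\mathrm{Nm}_{\mathbb K/\mathbb Q}\,g(u+X)$ then hands you an integer $q\ge0$ with $\mathfrak p\mid g(u+q)$ directly. You need $\mathfrak p$ only to be unramified, not of residue degree one. The resultants $\mathrm{Res}(g,g')$, $\mathrm{Res}(g,N_1)$, $\mathrm{Res}(g,D)$ do the work the paper compresses into ``bad reduction''.

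\emph{What each buys.} Your argument is fully elementary: Schur's Euclid-style argument replaces Chebotarev. The paper in fact only needs the weak form ``infinitely many completely split primes'', which itself follows from Schur. Your argument also avoids auxiliary data such as the $2s$-th roots of unity. Your opening remark about passing to a field $\mathbb L\ni c$ and returning by a norm computation is never used and can be deleted. The paper's route gives a slightly more transparent local picture. The congruence $u+q_0\equiv c\pmod{\mathfrak P}$ shows at once that $u+q_0$ is a simple root modulo $\mathfrak P$ of the relevant numerator or denominator, and that the complementary factor is a unit, with no resultant bookkeeping.
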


\begin{proof}
Write $R=N/D$ with coprime polynomials $N,D\in\mathbb Q[X]$.  In the
simple-zero case let $h=N$ and $g=D$; in the simple-pole case let $h=D$
and $g=N$.  Thus
$h(c)=0$ and $h'(c)g(c)\ne0$.
Choose a finite Galois extension $\mathbb M/\mathbb Q$ containing $\mathbb K$, $c$, and
the $2s$-th roots of unity.  By Chebotarev's theorem~\cite{neukirch},
there are infinitely many rational primes $p$ which split completely in
$\mathbb M$.  Choose one such $p$ after excluding the finitely many primes which
divide $2s$, the relevant denominators, or at which any of the displayed
non-zero quantities has bad reduction; also require that $u$ is integral
at every prime above $p$.

Fix a prime $\mathfrak P$ of $\mathbb M$ above $p$, and let
$\mathfrak p=\mathfrak P\cap\mathcal O_{\mathbb K}$.  Complete splitting identifies
the residue fields with $\mathbb F_p$.  Choose an integer $q_0$ such that
\[
        u+q_0\equiv c\pmod{\mathfrak P}.
\]
Then $h(u+q_0)\equiv0\pmod{\mathfrak P}$, while
$h'(u+q_0)g(u+q_0)\not\equiv0\pmod{\mathfrak P}$.  For
$q=q_0+tp$, Taylor expansion modulo $\mathfrak P^2$ shows that exactly one
class $t\pmod p$ makes $h(u+q)$ divisible by $\mathfrak P^2$.  Choose a
different class.  Every non-negative integer $q$ in the resulting class
modulo $p^2$ then satisfies
\[
        v_{\mathfrak p}\bigl(R(u+q)\bigr)=1
        \quad\text{in the simple-zero case},
\]
and
\[
        v_{\mathfrak p}\bigl(R(u+q)\bigr)=-1
        \quad\text{in the simple-pole case}.
\]
In either case, $R(u+q)$ is not a $d$-th power in $\mathbb K$ for any divisor
$d>1$ of $s$.  If $4\mid s$, then it is also not in $-4\mathbb K^4$, since
$p$ is odd and its $\mathfrak p$-valuation is $\pm1$.  Capelli's
criterion for binomials~\cite{lang-algebra} now implies that
$X^s-R(u+q)$ is irreducible over $\mathbb K$.
\end{proof}

\subsection{Proof of \cref{thm:degree-high-degree}}
Throughout this subsection, we set $k\geq 3$, $\ell\in\{1,\lceil k/2\rceil,\lceil k/2\rceil+1,\dots,k-2\}$ and $s=k-\ell$.
Recall that $u_{\ell}(x)=(1-x)^{-1/s}$, and let 
\[R_{\ell}(x)=
\begin{cases}
x^s+1,  &k/2\leq\ell\leq k-2,\\    
\frac{(sx^s+1)^s}{(sx^s+1)^s-(sx^s)^s}, & \ell=1.
\end{cases}\]
We begin by combining the two transfer functions of $\Pi^k_{\ell}$ given in \cref{tree-suspension} to obtain a new one. 
\begin{claim}\label{new-transfer}
For all integers $q\geq 1$, the function
\[
1-\frac{1}{R_{\ell}(u_\ell(x)+q)}
\]
is a transfer function of $\Pi^k_{\ell}$.
\end{claim}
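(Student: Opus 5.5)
The plan is to write the displayed function as a composition of the transfer functions from \cref{tree-suspension}: first $q$ applications of $\Phi_{k,\ell}$, then one application of the second transfer function. The second one is $\Psi$ when $k/2\le\ell\le k-2$ and $\Upsilon_k$ when $\ell=1$. A composition of transfer functions is again a transfer function, since each maps $\Pi^k_\ell$ into itself. The claim therefore reduces to an algebraic identity in the variable $u=u_\ell(x)$.

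First, by the iterate formula recorded after \cref{accumulation-orbits}, set $y=\Phi^q_{k,\ell}(x)$. Then $u_\ell(y)=u_\ell(x)+q$, so $1-y=(u_\ell(x)+q)^{-s}$. Write $w=u_\ell(x)+q$, so that $1-y=w^{-s}$ and $y=1-w^{-s}$. We note $w\ge q\ge1$, which keeps all expressions defined; in fact $w>1$ since $u_\ell(x)\ge1$ for $x\in[0,1)$.

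Case $k/2\le\ell\le k-2$ (then $\ell\ge k/2$, so $\Psi$ is available). Compute
\[
\Psi(y)=\frac{1}{2-y}=\frac{1}{1+w^{-s}}=\frac{w^s}{w^s+1},
\]
so $1-\Psi(y)=1/(w^s+1)=1/R_\ell(w)$. This is exactly the required form.

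Case $\ell=1$, with $s=k-1$. Here $k-y=(k-1)+w^{-s}=s+w^{-s}$, so
\[
\Upsilon_k(y)=\left(\frac{s}{s+w^{-s}}\right)^{s}=\left(\frac{sw^s}{sw^s+1}\right)^{s}.
\]
Hence
\[
1-\Upsilon_k(y)=\frac{(sw^s+1)^s-(sw^s)^s}{(sw^s+1)^s}=\frac{1}{R_1(w)}.
\]
This matches the definition of $R_1$, so in both cases
\[
1-\frac{1}{R_\ell(u_\ell(x)+q)}=(G\circ\Phi^q_{k,\ell})(x),
\]
with $G\in\{\Psi,\Upsilon_k\}$.

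The only point needing care is domain: every $\beta\in\Pi^k_\ell$ satisfies $\beta<1$, so $u_\ell(\beta)$ is defined, and each image stays in $[0,1)$, so iteration is legitimate. We also need $\Pi^k_\ell\subseteq[0,1)$ in the form used by \cref{tree-suspension}, where the input $F$ is non-empty. This holds because $\beta=\pi_\ell(F)<1$ for any $F$, and we may restrict to non-empty $F$. I expect no real obstacle beyond this bookkeeping; the main step is the algebraic simplification above.
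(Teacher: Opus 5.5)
Your proof is correct and matches the paper's argument: you write the function as $\Psi\circ\Phi^q_{k,\ell}$ for $k/2\le\ell\le k-2$, and as $\Upsilon_k\circ\Phi^q_{k,\ell}$ for $\ell=1$, using the iterate formula $\Phi^q_{k,\ell}(x)=1-(u_\ell(x)+q)^{-s}$, and then verify the resulting identities algebraically. Your additional check that $\Pi^k_\ell\subseteq[0,1)$, so that all compositions are defined, is left implicit in the paper.
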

\begin{proof}
Note that the composition of two transfer functions is still a transfer function.
Recall that $\Phi^q_{k,\ell}(x)=1-(q+u_{\ell}(x))^{-s}$. Then for $k/2\leq\ell\leq k-2$
\[
(\Psi\circ\Phi^q_{k,\ell})(x)=\frac{1}{2-\Phi^q_{k,\ell}(x)}=1-\frac{1}{(u_\ell(x)+q)^s+1}=1-\frac{1}{R_{\ell}(u_\ell(x)+q)}
\]
is a transfer function of $\Pi^k_{\ell}$.
Similarly, for $\ell=1$, we obtain 
\[
(\Upsilon_k\circ\Phi^q_{k,\ell})(x)=\left(\frac{k-1}{k-\Phi^q_{k,\ell}(x)}\right)^{k-1}=\left(\frac{s}{s+(q+u_{\ell}(x))^{-s}}\right)^{s}=1-\frac{1}{R_{\ell}(u_\ell(x)+q)}.
\]
\end{proof}

Thus for any integer $q\geq 1$, the new transfer function in \cref{new-transfer} sends a single $k$-graph with $\ell$-degree Tur\'an density $1-u^{-s}$ to a single $k$-graph with $\ell$-degree Tur\'an density $1-\frac{1}{R_\ell(u+q)}$. We now give the proof of \cref{thm:degree-high-degree}.

\begin{proof}[Proof of \cref{thm:degree-high-degree}]
Let $F_0$ be the single-edge $k$-graph. Then $\pi_\ell(F_0)=0$, so the positive root of $\pi_\ell(F_0)=1-x^{-s}$ is $u_0=1$, and $\mathbb K_0:=\mathbb Q(u_0)=\mathbb Q$.
We construct recursively $k$-graphs $F_i$ and positive real numbers $u_i$ such that
\[
        \pi_\ell(F_i)=1-u_i^{-s},
        \qquad
        \mathbb K_i:=\mathbb Q(u_i),
        \qquad
        [\mathbb K_i:\mathbb Q]=s^i.
\]
Moreover, for $i\ge1$ we will have $\mathbb Q(\pi_\ell(F_i))=\mathbb K_{i-1}$, so $\deg_{\mathbb Q}\pi_\ell(F_i)=s^{i-1}$. Since $s\ge2$ in all cases covered by the theorem, this proves the result. 
  
We first verify that $R_{\ell}$ satisfies the hypothesis of \cref{lem:rational-radical-shifts}.  For $k/2\leq\ell\leq k-2$,
$R_{\ell}(X)=X^s+1$ has a simple zero at any $c$ with $c^s=-1$.  For $\ell=1$, write $R_\ell(X)=\frac{N(X)}{D(X)}$ with $N(X)=(sX^s+1)^s$ and $D(X)=(sX^s+1)^s-(sX^{s})^s$.
Choose $\zeta\in\mu_s\setminus\{1\}$ and $c$ with
$c^s=\frac{1}{s(\zeta-1)}$.
Then $s c^s+1=\zeta s c^s$. Hence $D(c)=0$ and $N(c)\ne0$.  Moreover,
$D'(c)=s^{s+2}c^{s^2-1}(\zeta^{-1}-1)\ne0$.
Thus $R_\ell$ has a simple pole at $c$.

Now we construct the sequence $(F_i)_{i\geq 1}$ recursively, and suppose that we have obtained the $k$-graph $F_i$ (initially $i=0$). 
By \cref{lem:rational-primitive-shifts,lem:rational-radical-shifts}, there is an integer
$q_i\ge1$ such that $\mathbb Q(R_\ell(u_i+q_i))=\mathbb K_i$ and $X^s-R_\ell(u_i+q_i)$ is irreducible over $\mathbb K_i$.
Recall that $\pi_\ell(F_i)=1-u_i^{-s}$.
Then, by \cref{new-transfer}, we can choose a $k$-graph $F_{i+1}$ with 
\[
\pi_{\ell}(F_{i+1})=1-\frac{1}{R_\ell(u_i+q_i)}.
\]
Let $u_{i+1}$ be the positive real root of $x^s-R_\ell(u_i+q_i)=0$.  
Then $\pi_\ell(F_{i+1})=1-u_{i+1}^{-s}$.
Since $R_\ell(u_i+q_i)$ generates $\mathbb K_i$ and $X^s-R_\ell(u_i+q_i)$ is irreducible over $\mathbb K_i$, we obtain
$[\mathbb Q(u_{i+1}):\mathbb K_i]=s$.
Thus, setting $\mathbb K_{i+1}=\mathbb Q(u_{i+1})$ yields that
\[
[\mathbb K_{i+1}:\mathbb Q]=[\mathbb K_{i+1}:\mathbb K_{i}][\mathbb K_{i}:\mathbb Q]=s^{i+1}.
\]
Note also that
\[
\mathbb Q\bigl(\pi_\ell(F_{i+1})\bigr)=\mathbb Q\left(1-\frac{1}{R_\ell(u_i+q_i)}\right)=\mathbb Q(R_\ell(u_i+q_i))=\mathbb K_i,
\]
so $\pi_\ell(F_{i+1})$ has algebraic degree exactly $s^i$ over
$\mathbb Q$.
\end{proof}


\section{Concluding remarks}\label{remark}

In this paper we introduced tree suspensions as a unified mechanism for producing transfer functions in degree Tur\'an spectra.  More precisely, our construction may be viewed as a family of $(\ell,t)$-tree suspensions, where $0\le t\le s-1$.
For every value of $t$ in this range, there is a natural two-part construction giving the lower bound $Q_{s,t+1}(\gamma)$ for the corresponding $\ell$-degree Tur\'an density.  
We prove that this lower bound is sharp in the cases $t=0$, and also in the case $t=k-\ell-1$ when either $\ell\ge k/2$ or $\ell=1$.  These sharpness results are exactly what give the transfer functions used in this paper.

It is natural to ask whether the same two-part construction is always optimal.

\begin{conjecture}
Let $1\le \ell<k$ and $F$ be a $k$-graph with $\pi_{\ell}(F)=\beta$. Then $\pi_{\ell}(\mathcal T^{(k)}_{\ell,t}(F))=Q_{s,t+1}(\gamma)$ for every $0\le t< s$, where $s=k-\ell$ and $\gamma$ is the unique solution of $Q_{s,t+1}(x)=Q_{s,t+1-\ell}(x)-(1-\beta)x^s$.
\end{conjecture}

If this conjecture holds, then the tree-suspension framework would yield additional transfer functions in the remaining ranges of $\ell$.  In particular, for those degree spectra not covered by our present algebraic-degree theorem, one would obtain two distinct transfer functions acting on the same spectrum.  Iterating these functions should then produce elements of arbitrarily large algebraic degree, extending our high-degree result to all remaining non-codegree degree Tur\'an spectra.

The codegree case appears to be fundamentally different.  When $\ell=k-1$, the parameter range for $t$ collapses to the single value $t=0$.  Thus even if the above conjecture were proved in full generality, the present tree-suspension framework would still produce only one transfer function for the single codegree spectrum.  Since one transfer function of the type obtained here is not enough to force unbounded algebraic degree, the following problem remains especially intriguing.

\begin{problem}
For $k\ge3$, does the single codegree Tur\'an spectrum $\Pi^k_{k-1}$ contain algebraic numbers of arbitrarily large degree? 
\end{problem}

A positive answer would require a genuinely new construction producing a transfer function different from the one arising from tree suspensions.  Such a construction would have to be specific to higher-uniformity codegree problems.  Indeed, in the graph case $k=2$, codegree coincides with ordinary graph density, and the possible values are completely determined by the Erd\H{o}s--Stone--Simonovits theorem; in particular, no irrational values occur.  This raises the possibility that the single codegree spectrum may have a more rigid arithmetic structure than other degree spectra, even though finite-family codegree spectra in uniformity at least three are much richer and can realize all rational values \cite{GaoPikhurkoRongSun}.

A broader question is whether transfer functions can be used not only to generate new examples, but also to organize the entire spectrum.  Given a collection of transfer functions acting on $\Pi^k_\ell$, one may consider the semigroup that they generate and the orbits of known seed densities.  It is probably too optimistic to expect that every element of $\Pi^k_\ell$ is obtained exactly from a small set of seeds in this way.  Nevertheless, the orbit structure and its closure seem worth studying: they may explain why accumulation points and high algebraic complexity arise so naturally, and they may provide a systematic way of comparing different regions of the spectrum.

Finally, the tree-suspension method is not restricted to single forbidden hypergraphs.  The same two-part lower-bound constructions and the corresponding upper-bound questions make sense for finite forbidden families, and more generally for arbitrary families.  In the finite-family setting this should lead to transfer functions for finite-family degree spectra.  In the infinite-family setting one can also allow several input densities simultaneously, leading naturally to multivariable transfer functions.  Such multivariable operations may provide a useful framework for studying the much richer topology and arithmetic of family degree Tur\'an spectra.


\medskip
\noindent{\bf Acknowledgments and AI disclosure.~}
At an early stage of this project, the authors used AI as a brainstorming aid. In particular, one such discussion suggested a prototype of the $(2,0)$-tree suspension $\mathcal T^{(k)}_{2,0}(F)$, which the authors subsequently developed into the general tree-suspension framework used in this paper. The authors also used AI assistance on technical algebraic lemmas in Section~\ref{sec:support}.

\bibliographystyle{abbrv}
\bibliography{reference-suspension}

\end{document}